\theoremstyle{definition}
\theoremstyle{remark}
\theoremstyle{plain}
\newtheorem{thm}{Theorem}
\newtheorem{lem}[thm]{Lemma}
\newtheorem{prop}[thm]{Proposition}
\newtheorem{cor}[thm]{Corollary}
 \DeclareMathOperator{\Coeff}{Coeff}
 \DeclareMathOperator{\MDS}{\mathbf{MDS}}
 \newcommand{\NN}{\mathbb{N}}
 \newcommand{\ZZ}{\mathbb{Z}}
 \newcommand{\QQ}{\mathbb{Q}}
 \newcommand{\RR}{\mathbb{R}}
 \newcommand{\abs}[1]{\left\lvert #1 \right\rvert}
 \newcommand{\inv}[1][1]{^{- #1}}
 \newcommand{\set}[1]{\left\lbrace #1 \right\rbrace}
 \newcommand{\card}{\#}
 \newcommand{\FF}{\mathbb{F}_3}
 \newcommand{\FFq}{\mathbb{F}_q}
 \newcommand{\HH}{\mathcal{H}}
 \newcommand{\CC}{\mathcal{C}}
 \newcommand{\II}{\mathbb{I}}
 \newcommand{\MDSA}{\MDS(\mathcal{A})}
\begin{document}

\title{A Cantor set type result in the field of formal Laurent series.}

\author{S. H. PEDERSEN}

\address{S. H. PEDERSEN, Department of Mathematics, Aarhus University,
  Ny Munkegade 118, DK-8000 Aarhus C, Denmark}

\email{steffenh@imf.au.dk}


\subjclass[2010]{11J61, 11J83, 11K55}

\begin{abstract}
	We prove a Khintchine type theorem for approximation of elements in the Cantor set, as a subset of the formal Laurent series over $\FF$, by rational functions of a specific type.
	
	Furthermore we construct elements in the Cantor set with any prescribed irrationality exponent $\geq 2$.
\end{abstract}

\maketitle

\section{Introduction}
\label{sec:introduction}

In \cite{MR1512207} Khintchine proved, that for $\psi:\RR_{\geq 1} \to \RR_{> 0}$ a continuous function with \break$x \mapsto x^2 \psi(x)$ non-increasing, the set
    \[
        W(\psi) = \set{ \xi \in \RR :
             \abs{ \xi - \dfrac{p}{q} } < \psi(q)
             \text{ for infinitely many } \dfrac{p}{q} \in \QQ}
    \]
of $\psi$-well approximable numbers has Lebesgue measure $0$ if the series
    \[
        \sum_{q = 1}^{\infty} q\psi(q)   
    \]
converges, and full Lebesgue measure if the series diverge. The analogues statement in the field of formal Laurent series over finite fields was shown by de Mathan in \cite{MR0274396}.

In \cite{MR2295506} Levesly, Salp and Velani established a Khintchine type theorem for $\psi$-well approximable numbers in the Cantor set by rational numbers of the form $\frac{p}{3^n}, p \in \NN$.

The first part of this paper will establish the analogous statement in the field of formal Laurent series over $\FF$, where the Cantor set consists of those formal Laurent series in the unit ball around $0$ having only the coefficients $0$ and $2$.

The second part of the paper will construct elements of the Cantor set with any prescribed irrationality exponent $\geq 2$. This is the analogue of the result in \cite{MR2399165} by Bugeaud.

The proofs follow the approach from \cite{MR2295506} and \cite{MR2399165}, but with the simplifications and complications of working over an ultrametric field.

\section{Preliminaries}
\label{sec:preliminaries}

Let $\FF$ be the field with $3$ elements and let $\FF[X]$ be the polynomial ring over $\FF$. We can introduce an absolute value on $\FF[X]$, by letting $\abs{P}=3^{\deg P}$ for $P \in \FF[X] \setminus \set{0}$, and $\abs{0}=0$. This in turn gives an absolute value on the rational functions $\FF(X)$, and by completing with respect to this absolute value, we get the field of formal Laurent series over $\FF$, that is the set
    \[
        \FF((X\inv))=
        \set{\sum_{n=-N}^{\infty}{a_{-n} X\inv[n]}:
        a_{-n} \in \FF, a_N\neq 0} \cup \set{0},
    \]
where we have the absolute value
    \[
        \abs{\sum_{n=-N}^{\infty}{a_{-n} X\inv[n]}}=3^N
    \]
for the nonzero elements, and still $\abs{0}=0$. $\FF((X\inv))$ with the given absolute value is an ultrametric space. We will restrict our attention to the unit ball in $\FF((X\inv))$ around $0$, that is the set
    \[
        \II = \set{h \in \FF((X\inv)) : \abs{h} < 1 }.   
    \]

$\II$ is the set of formal Laurent series on the form
    \[
        \sum_{n=1}^{\infty}{a_{-n} X\inv[n]}
    \]
where $a_{-n} \in \FF$, and where $0$ is the element with all the coefficients $a_{-n}=0$. We can write the absolute value on $\II$ as
    \[
        \abs{h} =
            \begin{cases}
                0 & \text{, if } h = 0, \\
                3^{- N} & \text{, if } h \neq 0,
                     N = \min \set{n : a_{-n} \neq 0}.
            \end{cases}       
    \]

For $x \in \FF((X\inv))$ we let $B(x,3^n)$ be the ball around $x$ with radius $3^n$, and for $a_{-1}, \dots , a_{-\ell} \in \FF$ we let
    \[
        B[a_{-1}, \dots , a_{-\ell}] =
        B(a_{-1}X\inv[1] + \dots + a_{-\ell}X\inv[\ell], r\inv[\ell]) \subseteq \II.
    \]
This ball consists of those elements in $\II$ with the first $\ell$ coefficients given by $a_{-1}, \dots , a_{-\ell}$.

It follows from the definition of the absolute value that every ball have radius $3\inv[n]$ for some $n$. In particular every ball inside $\II$ is of the given form. We denote the radius of the ball $B$ by $r(B)$.

In this paper we will look at the Cantor set, but in the setting of formal Laurent series. We define the 'Cantor set` as
    \[
        \CC = \set{ h \in \II : a_{-n} \in \set{0,2} }.
    \]
   
We let $\psi : \set{3^n : n \in \NN} \to \set{3\inv[r] : r \in \ZZ}$ be a function, and are going to study the set
    \[
        W_\CC(\psi) = \set{ h \in \CC :
         \abs{ h - \dfrac{g}{X^N}} < \psi(3^N),
        \text{ for infinitely many } N \in \NN, \text{ where } g \in \mathcal{F}(N) },
    \]
where
    \[
        \mathcal{F}(N) = \set{ f \in \FF[X] :
        \Coeff(f) \subseteq \set{0,2}, \deg f < N},
    \]
of $\psi$-well approximable elements in the Cantor set, by rational functions contained in the Cantor set of a specific form. In this respect, we are concerned with intrinsic Diophantine approximation.

For later we note that
    \begin{equation}
    \label{eq:1}
        \card{\mathcal{F}(N)}= 2^N,
    \end{equation}
and that
    \[
        W_\CC(\psi) = \set{ h \in \CC :
        h \in \bigcup_{g \in \mathcal{F}(N)}
                B \left( \dfrac{g}{X^N} , \psi(3^N) \right)
                \text{ for infinitely many } N \in \NN },
    \]
when expressed in terms of balls instead of approximation. So
    \[
        W_\CC(\psi)
        = \limsup_{N \to \infty} A_N
        = \set{ f \in \CC : f \in A_N \text{ for infinitely many } N \in \NN },
    \]
where
    \[
        A_N = \bigcup_{g \in \mathcal{F}(N)} B \left( \dfrac{g}{X^N}, \psi(3^N)\right).
    \]

Just as with every metric, locally compact space we can introduce the notion of Hausdorff measure, and Hausdorff dimension. We let $f:\RR_{\geq 0}\to \RR_{\geq 0}$ be a dimension function i.e. $f$ is continuous, non-decreasing and satisfy $f(0)=0$.
 We can now define the Hausdorff $f-$measure in the following manner. For $A \subseteq \FF((X\inv))$ and $\rho>0$, we let $\mathcal{B}_\rho$ be the family of countable open covers of $A$, by balls $B$ of radius $r(B)\leq \rho$.
We can now define the Hausdorff $f-$measure by
    \[
        \HH^f(A) = \lim_{\rho \to 0} \inf_{\mathcal{B} \in \mathcal{B}_\rho} \sum_{B_i \in \mathcal{B}}{f(r(B_i))}.
    \]

If $f$ is the dimension function given by $f(x)=x^s$ for a $s>0$, we call it the Hausdorff $s-$measure, and denote it by $\HH^s$. We define the Hausdorff dimension by
    \[
        \dim_H(A)=\inf\set{ s>0 : \HH^s(A)=0 }.
    \]

Using standard techniques we can determine the Hausdorff dimension of $\CC$, in fact we have the following result.
\begin{prop}\label{prop:1}
    Let $\gamma = \frac{\log(2)}{\log(3)}$. For any ball $B$ with $r(B) \leq 1$ and $B \cap \CC \neq \emptyset$ we have
        \[
            \HH^\gamma(B \cap \CC) = r(B)^\gamma,
        \]
and in particular for $B = \II$ we have
        \[
            \HH^\gamma(\CC) = 1 \text{ and } \dim_H(\CC)=\gamma.
        \]
\end{prop}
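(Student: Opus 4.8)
The plan is to prove the ball statement $\HH^\gamma(B\cap\CC)=r(B)^\gamma$ by establishing the two inequalities separately, using an explicit covering for the upper bound and the Mass Distribution Principle for the lower bound, and then to read off the assertions about $\CC$ by taking $B=\II$, for which $r(\II)=1$. Throughout I would use that every ball contained in $\II$ is of the form $B[a_{-1},\dots,a_{-\ell}]$ with radius $3^{-\ell}$, as recorded in the preliminaries, and the key numerical identity $3^{\gamma}=2$.

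For the upper bound: since $r(B)\le 1$ and $B\cap\CC\neq\emptyset$, write $B=B[a_{-1},\dots,a_{-n}]$ with $3^{-n}=r(B)$; as some element of $\CC$ shares these first $n$ coefficients, necessarily $a_{-i}\in\set{0,2}$. For each $k\ge 0$ the set $B\cap\CC$ is covered by the $2^{k}$ balls $B[a_{-1},\dots,a_{-n},b_{-n-1},\dots,b_{-n-k}]$ with $b_{-j}\in\set{0,2}$, each of radius $3^{-(n+k)}$. The corresponding sum of $f$-values is $2^{k}\bigl(3^{-(n+k)}\bigr)^{\gamma}=2^{k}\,2^{-(n+k)}=2^{-n}=r(B)^{\gamma}$. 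Letting $k\to\infty$ produces admissible covers of arbitrarily small mesh, so $\HH^{\gamma}(B\cap\CC)\le r(B)^{\gamma}$.

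For the lower bound I would invoke the Mass Distribution Principle. Define a set function $\mu$ on the balls contained in $\II$ by $\mu(B[a_{-1},\dots,a_{-\ell}])=2^{-\ell}$ if every $a_{-i}\in\set{0,2}$ and $\mu(B[a_{-1},\dots,a_{-\ell}])=0$ otherwise, together with $\mu(\II)=1$. Splitting a ball of radius $3^{-\ell}$ into the three balls of radius $3^{-(\ell+1)}$ it contains shows that $\mu$ is finitely additive on the algebra generated by the balls, so by the Carath\'eodory (Kolmogorov) extension theorem it extends to a Borel probability measure supported on $\CC$. By construction every ball $B'\subseteq\II$ satisfies $\mu(B')\le r(B')^{\gamma}$, with equality exactly when $B'\cap\CC\neq\emptyset$; since in an ultrametric space every subset of diameter $3^{-m}$ lies inside a single ball of radius $3^{-m}$, the hypothesis of the Mass Distribution Principle holds with constant $1$. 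It then yields $\HH^{\gamma}(A)\ge\mu(A)$ for every $A$ with $\mu(A)>0$, and applying this to $A=B\cap\CC$ gives $\HH^{\gamma}(B\cap\CC)\ge\mu(B)=r(B)^{\gamma}$.

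Combining the two bounds gives $\HH^{\gamma}(B\cap\CC)=r(B)^{\gamma}$; taking $B=\II$ gives $\HH^{\gamma}(\CC)=1$, and the standard fact that $0<\HH^{s_0}(A)<\infty$ forces $\dim_H A=s_0$ then gives $\dim_H(\CC)=\gamma$. I expect the only genuinely delicate point to be the verification that $\mu$ extends to a countably additive Borel measure; everything else is a routine ultrametric transcription of the classical Cantor set computation, in fact slightly simpler because the ultrametric structure reduces the mass condition to a statement about balls alone.
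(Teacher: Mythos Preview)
Your argument is correct. The upper bound is essentially identical to the paper's. For the lower bound you and the paper take different but equally valid routes: the paper argues directly on covers, first reducing to a finite cover of $B\cap\CC$ by balls meeting $\CC$ (using compactness), and then repeatedly splitting each ball $B[a_{-1},\dots,a_{-\ell}]$ into its three children and discarding the one with middle digit $1$; since $r(\tilde B)^\gamma=r(A_0)^\gamma+r(A_2)^\gamma$, the $\gamma$-sum is preserved, and once all balls have the common smallest radius $3^{-k}$ a counting argument gives the bound. Your approach instead packages the same self-similarity into the natural Bernoulli measure on $\CC$ and invokes the Mass Distribution Principle. The paper's route is more elementary in that it avoids the measure-extension step you flag as delicate, while your route is the standard textbook method and makes the later use of $\mu=\HH^\gamma\restriction\CC$ in the paper more transparent (indeed, your $\mu$ \emph{is} that restriction). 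Either way the computation is the same $3^\gamma=2$ bookkeeping.
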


\begin{proof}
    Throughout the proof let $B$ be a ball with $B \cap \CC \neq \emptyset$ and $r(B) = 3\inv[\ell_0] \leq 1$. Then $B=B[a_{-1}, \dots, a_{-\ell_0}]$ for some $a_{-1}, \dots, a_{-\ell_0} \in \FF$.

    For the upper bound, let $\rho = 3\inv[j] \leq 3\inv[\ell_0]$. Then $B \cup \CC$ can be covered by the collection of $2^{j-\ell_0}$ balls
    \[
        \mathcal{B}'=
        \set{ B[a_{-1},\dots,a_{-\ell_0},a_{-(\ell_0+1)}, \dots, a_{-j}] :
        a_{-(\ell_0+1)}, \dots, a_{-j} \in \set{0,2}}
    \]
of radius $3\inv[j]$. We then have
    \[
        \inf_{\mathcal{B} \in \mathcal{B}_\rho} \sum_{B_i \in \mathcal{B}}{(r(B_i))^\gamma}
        \leq \sum_{B_i \in \mathcal{B'}}(r(B_i))^\gamma = 2^{j-\ell_0}(3\inv[j])^\gamma = 2\inv[\ell_0] = (3\inv[\ell_0])^\gamma = r(B)^\gamma,
    \]
since $3^\gamma = 2$. By letting $j \to \infty$, we get that $\HH^\gamma(B \cap \CC) \leq r(B)^\gamma$, which gives the upper bound.

For the lower bound let $\mathcal{B}$ be a cover of $B \cap \CC$ by balls. Then we want to show that
    \[
        r(B)^\gamma \leq \sum_{B_i \in \mathcal{B}}(r(B_i))^\gamma.
    \]

First we may restrict the balls to lie in $B$, potentially decreasing the sum. If the inequality holds true when summing over a subset of $\mathcal{B}$, then it holds true when summing over $\mathcal{B}$. Since $B \cap \CC$ is compact, due to balls in $\FF((X\inv))$ being clopen, we can cover $B \cap \CC$ by a finite subset of $\mathcal{B}$.
Furthermore if a ball $\tilde{B} = B[a_{-1}, \dots, a_{-\ell}]$ of radius $3\inv[\ell]$ have one of $a_{-1},\dots, a_{-\ell}$ equal to $1$, then $\tilde{B} \cap \CC = \emptyset$, and we remove it from the finite subcover. The remaining balls we denote by $\mathcal{B}'$, and note that $\mathcal{B}'$ is a finite cover of $B \cap \CC$, each ball having nonempty intersection with $\CC$.

Let the smallest ball in $\mathcal{B}'$ have radius $3\inv[k]$. For balls $\tilde{B} = B[a_{-1}, \dots, a_{-\ell}]$ of radius $3\inv[\ell] > 3\inv[k]$, $\tilde{B}$ can disjointly be split into three balls $A_0, A_1, A_2$ of radius $3\inv[(\ell+1)]$ by $A_i = B[a_{-1}, \dots, a_{-\ell}, i]$ for $i = 0,1,2$. Now $A_1 \cap \CC = \emptyset$ and
    \[
        r(\tilde{B})^\gamma = (3\inv[\ell])^\gamma = 3^\gamma (3\inv[(\ell +1)])^\gamma = 2 (3\inv[(\ell +1)])^\gamma = r(A_0)^\gamma+r(A_2)^\gamma,
    \]
so replacing the $B$ by $A_0$ and $A_2$ does not change the sum, and we still have a cover of $B \cap \CC$.

By iterating the procedure we end up with a cover of $B \cap \CC$ by balls of radius $3\inv[k]$. Since it is a cover we must have at least $2^{k-\ell_0}$ such balls, and hence
    \[
        \sum_{B_i \in \mathcal{B}}r(B_i)^\gamma
        \geq \sum_{B_i \in \mathcal{B}'}r(B_i)^\gamma
        \geq 2^{k-\ell_0} 3\inv[k \gamma] = 2\inv[\ell_0] = r(B)^\gamma,
    \]
which is the lower bound.
\end{proof}

We are now ready to state the analogue of the main result of \cite{MR2295506} in the setting of formal Laurent series.

\begin{thm}
    \label{thm:main}\label{THM:MAIN}
    Let $f$ be a dimension function such that $r\inv[\gamma]f(r)$ is monotonic. Then
    \[
        \HH^f(W_\CC(\psi))=
            \begin{cases}
                0 & \text{ if } \sum_{n=1}^{\infty}{f(\psi(3^n))\times (3^n)^\gamma}<\infty\\
                \HH^f(\CC) & \text{ if } \sum_{n=1}^{\infty}{f(\psi(3^n))\times (3^n)^\gamma}=\infty
            \end{cases}
        \]
\end{thm}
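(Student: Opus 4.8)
The plan is to treat the convergence and divergence cases separately, following the classical structure of a Khintchine-type theorem but exploiting the ultrametric geometry of $\FF((X\inv))$, which replaces the delicate overlap estimates of the real case with clean disjointness statements.

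For the convergence case I would argue directly from the definition of Hausdorff $f$-measure. Recall $W_\CC(\psi) = \limsup_{N\to\infty} A_N$ with $A_N = \bigcup_{g\in\mathcal F(N)} B(g/X^N, \psi(3^N))$. For a fixed threshold $M$, the tail $\bigcup_{N\ge M} A_N$ covers $W_\CC(\psi)$, and each $A_N$ is a union of at most $2^N$ balls of radius $\psi(3^N)$ by \eqref{eq:1}. However, to get the right power of $2^N$ one must intersect with $\CC$ first: the key geometric observation is that $B(g/X^N,\psi(3^N)) \cap \CC$, being (up to the issue of whether the center lies in $\CC$) a ball-slice of the Cantor set, can be covered efficiently — by Proposition \ref{prop:1} its $\HH^\gamma$-mass is $\psi(3^N)^\gamma$, and by the monotonicity hypothesis on $r\inv[\gamma]f(r)$ one bounds $\sum f(r(B_i))$ over a cover by sub-balls of radius $\psi(3^N)$ in terms of $f(\psi(3^N)) \times (3^N)^\gamma$ (one must handle the two monotonicity directions, using that the natural cover of $A_N\cap\CC$ consists of $\le 2^N \psi(3^N)^\gamma \cdot 3^{N\gamma}$ balls of radius $\psi(3^N)$, i.e.\ roughly $2^N$ balls when $\psi(3^N)\approx 3^{-N}$, and fewer sub-balls otherwise). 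Summing over $N\ge M$ gives $\HH^f(W_\CC(\psi)) \ll \sum_{N\ge M} f(\psi(3^N))(3^N)^\gamma \to 0$ as $M\to\infty$, which is the convergence conclusion.

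For the divergence case the strategy is to show that $\CC \setminus W_\CC(\psi)$ is null for $\HH^f$, equivalently that almost every (with respect to the measure $\mu$ on $\CC$ normalized so $\mu(B\cap\CC)=r(B)^\gamma$, which exists by Proposition \ref{prop:1}) point of $\CC$ lies in infinitely many $A_N$. I would build the natural Cantor-like measure $\mu$ supported on $\CC$, verify a mass distribution bound $\mu(B) \ll f(r(B))$ for small balls $B$ (this is where $r\inv[\gamma]f(r)$ monotonic enters, converting the $\gamma$-dimensional mass into an $f$-mass bound), and then invoke the Mass Distribution Principle together with a suitable divergence Borel–Cantelli statement for the sets $A_N \cap \CC$. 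The ultrametric setting makes the required quasi-independence manageable: two balls $B(g/X^N,\psi(3^N))$ and $B(g'/X^{N'},\psi(3^{N'}))$ are either nested or disjoint, so $\mu$-measures of finite intersections factor or vanish cleanly, and one gets that $\sum_N \mu(A_N\cap\CC) = \sum_N (\text{number of relevant }g)\cdot\psi(3^N)^\gamma$ diverges precisely under the stated hypothesis, with the second moment controlled in the same way. From $\mu(\limsup A_N) > 0$ on every sub-ball of $\CC$ one upgrades to full measure by a zero–one law / local density argument, and then $\HH^f(W_\CC(\psi)) = \HH^f(\CC)$ follows from the mass distribution bound applied to the set of full $\mu$-measure.

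The main obstacle I anticipate is the divergence case's quasi-independence / second-moment estimate: one must carefully count, for each $N$, how many of the $2^N$ rational functions $g/X^N$ produce balls that actually meet $\CC$ and are not already absorbed into balls coming from smaller $N$ (nesting in the ultrametric forces attention to when $\psi(3^N)$ is large relative to $3^{-N}$), and then control $\mu(A_N \cap A_{N'}\cap\CC)$ uniformly enough to run a divergence Borel–Cantelli argument. The convergence case and the measure-construction step should be routine given Proposition \ref{prop:1}; the monotonicity hypothesis on $r\inv[\gamma]f(r)$ is the technical hinge that makes the $f$-measure bounds match the $\gamma$-dimensional counting in both cases.
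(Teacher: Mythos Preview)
Your convergence argument is over-engineered: the paper simply observes that for $N \ge N_\rho$ (chosen so that $\psi(3^N) < \rho$) the family $\{B(g/X^N,\psi(3^N)) : N \ge N_\rho,\ g \in \mathcal F(N)\}$ is a $\rho$-cover of $W_\CC(\psi)$, and summing $f(\psi(3^N))$ over it gives $\sum_{N \ge N_\rho} 2^N f(\psi(3^N)) = \sum_{N \ge N_\rho} (3^N)^\gamma f(\psi(3^N))$, the tail of the convergent series. No appeal to Proposition~\ref{prop:1} or to the monotonicity of $r^{-\gamma}f(r)$ is needed at this stage.

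The divergence argument has a genuine gap. Your Borel--Cantelli / quasi-independence plan, applied to $\mu = \HH^\gamma|_\CC$, is essentially the paper's Theorem~\ref{thm:1}, and the paper carries it out much as you sketch, with the useful device of replacing $\mathcal F(N)$ by $\mathcal F^*(N) = \{g \in \mathcal F(N) : g(0) = 2\}$ so that balls at different levels are genuinely disjoint in the borderline regime and the second-moment computation becomes an equality. But two problems remain. First, the divergence hypothesis is $\sum f(\psi(3^n))(3^n)^\gamma = \infty$, whereas your $\mu$-Borel--Cantelli needs $\sum \mu(A_n) \asymp \sum \psi(3^n)^\gamma (3^n)^\gamma = \infty$; these are different sums and the second need not diverge. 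Second, even granting $\mu(W_\CC(\psi)) = 1$, the Mass Distribution Principle only yields $\HH^f(W_\CC(\psi)) \ge c$ for some finite $c$, while the conclusion demands $\HH^f(W_\CC(\psi)) = \HH^f(\CC)$, which equals $\infty$ whenever $r^{-\gamma}f(r) \to \infty$ as $r \to 0$. A probability measure cannot witness an infinite Hausdorff lower bound via the Mass Distribution Principle.

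The missing ingredient is the \emph{Mass Transference Principle} of Beresnevich--Velani (Theorem~\ref{GMTP}). The paper first establishes the special case $f(r) = r^\gamma$ (your programme), then defines $\theta(r) \approx f(\psi(r))^{1/\gamma}$ so that $\sum (3^n \theta(3^n))^\gamma = \sum (3^n)^\gamma f(\psi(3^n)) = \infty$; Theorem~\ref{thm:1} gives $\HH^\gamma(B \cap W_\CC^*(\theta)) = \HH^\gamma(B)$ for every ball $B \subseteq \CC$, and the Mass Transference Principle converts this into $\HH^f(B \cap W_\CC^*(\psi)) = \HH^f(B)$. This transfer step is exactly where the monotonicity of $r^{-\gamma}f(r)$ enters, and it is not replaceable by a mass-distribution argument.
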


\section{Toolbox}
\label{sec:mtt}

In this section we collect a lot of results which we will use in the rest of the paper.

We will need the following version of the diverging part of the Borel--Cantelli lemma, Lemma 2.3 in \cite{MR1672558}.

\begin{lem}\label{BC}
    Let $(X,\mu)$ be a finite measure space. Let $\mathcal{A}_n$ be a sequence of measurable subsets of $X$. If
    \[
        \sum_{n = 1}^{\infty} \mu(\mathcal{A}_n) = \infty,
    \]
then
    \[
        \mu(\limsup_{n \to \infty} \mathcal{A}_n) \geq
        \limsup_{N \to \infty}\dfrac
        {\left( \sum_{k = 1}^{N}\mu(\mathcal{A}_k) \right)^2}
        {\sum_{n,m = 1}^{N}  \mu(\mathcal{A}_n \cap \mathcal{A}_m)}.   
    \]
\end{lem}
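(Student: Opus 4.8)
The lemma above is the \emph{Kochen--Stone} form of the divergent half of the Borel--Cantelli lemma, and the plan is to deduce it from a single second-moment (Cauchy--Schwarz) estimate, applied not once but to every tail $\mathcal{A}_M,\mathcal{A}_{M+1},\dots$ of the sequence.

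First I would fix $1\le M\le N$ and introduce the counting function $f_{M,N}=\sum_{k=M}^{N}\mathbbm{1}_{\mathcal{A}_k}$, a nonnegative measurable function whose positivity set is exactly $U_{M,N}:=\bigcup_{k=M}^{N}\mathcal{A}_k$. Because $\mu$ is finite, $\int_X f_{M,N}\,d\mu=\sum_{k=M}^{N}\mu(\mathcal{A}_k)$ and $\int_X f_{M,N}^2\,d\mu=\sum_{M\le n,m\le N}\mu(\mathcal{A}_n\cap\mathcal{A}_m)$ are both finite; and since $f_{M,N}$ vanishes off $U_{M,N}$, the Cauchy--Schwarz inequality gives
\[
    \Bigl(\sum_{k=M}^{N}\mu(\mathcal{A}_k)\Bigr)^2
    =\Bigl(\int_{U_{M,N}}f_{M,N}\,d\mu\Bigr)^2
    \le\mu(U_{M,N})\int_X f_{M,N}^2\,d\mu
    =\mu(U_{M,N})\sum_{M\le n,m\le N}\mu(\mathcal{A}_n\cap\mathcal{A}_m).
\]
Enlarging the denominator to the full double sum over $1\le n,m\le N$ and writing $E_N:=\sum_{k=1}^{N}\mu(\mathcal{A}_k)$, $D_N:=\sum_{n,m=1}^{N}\mu(\mathcal{A}_n\cap\mathcal{A}_m)$ so that $\sum_{k=M}^{N}\mu(\mathcal{A}_k)=E_N-E_{M-1}$, this would give, for all $N\ge M$,
\[
    \mu\Bigl(\bigcup_{k\ge M}\mathcal{A}_k\Bigr)\ge\mu(U_{M,N})\ge\frac{(E_N-E_{M-1})^2}{D_N}.
\]

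Next I would fix $M$ and let $N\to\infty$. Since $E_N\to\infty$ by hypothesis while $E_{M-1}$ is a fixed finite number, $\limsup_N (E_N-E_{M-1})^2/D_N=\limsup_N E_N^2/D_N$, so the last display yields $\mu\bigl(\bigcup_{k\ge M}\mathcal{A}_k\bigr)\ge\limsup_N E_N^2/D_N$ for \emph{every} $M$. Finally, as $\limsup_n\mathcal{A}_n=\bigcap_{M\ge 1}\bigcup_{k\ge M}\mathcal{A}_k$ is a decreasing intersection of sets of finite $\mu$-measure, continuity from above gives $\mu(\limsup_n\mathcal{A}_n)=\lim_{M\to\infty}\mu\bigl(\bigcup_{k\ge M}\mathcal{A}_k\bigr)\ge\limsup_N E_N^2/D_N$, which is the assertion. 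The only point needing care --- and, I expect, the main obstacle --- is that the Cauchy--Schwarz step by itself controls only the finite unions $U_{1,N}$, not the $\limsup$ set; reaching the $\limsup$ forces one to run the estimate on every tail and then pass to the limit via continuity from above, and it is precisely at that last step that finiteness of $\mu$ is essential. The remaining ingredients (the $L^2$-identities for $f_{M,N}$ and the harmless passage from $E_N-E_{M-1}$ to $E_N$) are routine.
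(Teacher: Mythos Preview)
Your argument is correct and is the standard Kochen--Stone proof: Cauchy--Schwarz applied to the counting function $f_{M,N}$ on its support, followed by a tail argument and continuity from above. The only step that deserves a word of justification is the claim that $\limsup_N (E_N-E_{M-1})^2/D_N=\limsup_N E_N^2/D_N$; this is fine because $(E_N-E_{M-1})^2/E_N^2\to 1$ and the product of a sequence with a sequence tending to $1$ has the same $\limsup$, but you might make that explicit.

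As for comparison with the paper: there is nothing to compare. The paper does not prove this lemma at all; it simply quotes it as Lemma~2.3 of Harman's \emph{Metric Number Theory} and moves on. So your proposal is not an alternative to the paper's proof but a self-contained proof of a result the paper takes for granted.
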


Furthermore we need the following generalisation of the Mass Transference Principle, Theorem 3 in \cite{MR2259250}, but slightly simplified to the current setting.

For a dimension function $f$ and a ball $B$ inside $\CC$, that is a ball in the relative topology, of the form $B=B(x,r)$, we can define the transformation of $B$ by $f$ as the ball
    \[
        B^f = B(x, f(r)^{1/\gamma}).
    \]

If the dimension function is just $r \mapsto r^s$ for some $s > 0$, we just write the transformed ball as $B^s$. In particular we have that $B^\gamma = B$.

\begin{thm}[The Generalised Mass Transference Principle]\label{GMTP}
    Let $\set{B_i}_{i \in \NN}$ be a sequence of balls in $\CC$ with $r(B_i) \to 0$ as $i \to \infty$. Let $f$ be a dimension function such that $x \mapsto x\inv[\gamma]f(x)$ is monotonic. Suppose that any ball $B \subseteq \CC$ satisfy
    \[
        \mathcal{H}^\gamma \left( B \cap \limsup_{i \to \infty} B_i^f \right)
        = \mathcal{H}^\gamma (B).
    \]
Then any ball $B \subseteq \CC$ satisfy
    \[
        \mathcal{H}^f \left( B \cap \limsup_{i \to \infty} B_i^\gamma \right)
        = \mathcal{H}^f (B).
    \]
\end{thm}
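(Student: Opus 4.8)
The statement is the Beresnevich--Velani Mass Transference Principle in the ultrametric space $(\CC,\HH^\gamma)$, with $\gamma=\log 2/\log 3$ in the role of the ambient dimension, and the plan is to adapt their argument, exploiting throughout that in $\FF((X\inv))$ any two balls are nested or disjoint. (Alternatively one could try to quote the general metric-space form of that principle after checking, via Proposition~\ref{prop:1}, that $\HH^\gamma$ computes ball volumes exactly and has the required regularity.) First I would dispose of the degenerate cases. Monotonicity makes $L:=\lim_{r\to0^+}r\inv[\gamma]f(r)$ exist in $[0,\infty]$. If $L=0$, covering $\CC$ by its $2^n$ balls of radius $3\inv[n]$ gives $\HH^f(\CC)=0$ and the asserted identity reads $0=0$. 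If $0<L<\infty$, then $f(r)\asymp r^\gamma$ near $0$, so $\HH^f\asymp\HH^\gamma$ on $\CC$ and $r(B_i^f)\asymp r(B_i^\gamma)=r(B_i)$, whence the conclusion follows from the hypothesis by scale-invariance of ``full measure''. So I may assume $L=\infty$; then $r\inv[\gamma]f(r)$ is necessarily non-increasing, $f(r)\ge r^\gamma$ for all small $r$ (so $r(B_i^f)=f(r(B_i))^{1/\gamma}\ge r(B_i)$, and $r(B_i^f)\to0$ since $f$ is continuous with $f(0)=0$), and a one-line computation from Proposition~\ref{prop:1} shows $\HH^f(B\cap\CC)=\infty$ for every ball $B$ meeting $\CC$.

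In this case the goal is $\HH^f(B\cap\limsup_iB_i)=\infty$ for every ball $B\subseteq\CC$, and I would reduce it to a \emph{local} estimate: an absolute constant $c>0$ with $\HH^f(B'\cap\limsup_iB_i)\ge f(r(B'))/c$ for every ball $B'\subseteq\CC$. Indeed, given any ball $B$ and $\rho<r(B)$, split $B\cap\CC$ into its $(r(B)/\rho)^\gamma$ pairwise disjoint sub-balls of radius $\rho$; these are mutually separated by $\ge\rho$, so $\HH^f$ (a metric outer measure) is additive over them, giving
\[
    \HH^f(B\cap\limsup_iB_i)\;\ge\;\frac1c\Big(\frac{r(B)}{\rho}\Big)^{\!\gamma}\!f(\rho)\;=\;\frac{r(B)^\gamma}{c}\cdot\frac{f(\rho)}{\rho^\gamma}\;\xrightarrow[\rho\to0]{}\;\infty ,
\]
since $f(\rho)/\rho^\gamma\to L=\infty$.

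To establish the local estimate I would build a Cantor-type subset $\mathbb{K}\subseteq B'\cap\limsup_iB_i$ carrying a measure $\mu$ of total mass $f(r(B'))$. Starting from $\set{B'}$, I refine a ball $L$ of the current level (a member $B_i$ of the sequence once past the top level) as follows: the hypothesis applied to $L$ says the balls $B_j^f$ with $B_j^f\subseteq L$ and $j$ large form a $\limsup$ family of full $\HH^\gamma$-measure in $L$ (any $B_j^f$ meeting $L$ lies inside it once $r(B_j^f)<r(L)$); passing to maximal, hence disjoint, members, a finite sub-collection $B_{j_1}^f,\dots,B_{j_m}^f\subseteq L$ captures a proportion $\ge1-\varepsilon$ of $\HH^\gamma(L)$; I send the real balls $B_{j_1},\dots,B_{j_m}$ (still pairwise disjoint, as $B_{j_k}\subseteq B_{j_k}^f$) into the next level, with indices increasing so that $\mathbb{K}\subseteq\limsup_iB_i$, and spread $\mu(L)$ over them with weights proportional to the masses $\HH^\gamma(B_{j_k}^f)$. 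Granting the key bound $\mu(A)\le c\,f(r(A))$ for all balls $A$, with $c$ absolute, the Mass Distribution Principle gives $\HH^f(\mathbb{K})\ge\mu(\mathbb{K})/c=f(r(B'))/c$, which is the local estimate. The nested/disjoint dichotomy reduces the verification of the bound to a ball $A$ contained in one level-$n$ construction ball but meeting only children contained in $A$: there $\mu(A)=\sum_{B_{j_k}\subseteq A}\mu(B_{j_k})$, and after bounding the normalisation below by $1-\varepsilon$ one splits $\sum_{B_{j_k}\subseteq A}\HH^\gamma(B_{j_k}^f)$ into the transformed balls contained in $A$ (disjoint, so their $\HH^\gamma$-masses total at most $\HH^\gamma(A\cap\CC)=r(A)^\gamma\le f(r(A))$) and the at most one transformed ball properly containing $A$ (of mass $\le f(r(B_{j_k}))\le f(r(A))$, since $r(B_{j_k})\le r(A)$), obtaining a bound of the desired shape with a constant controlled by $\mu(L)/\HH^\gamma(L)$; the other case, $A$ containing whole level-$n$ balls, goes the same way.

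The step I expect to be the main obstacle is securing $\mu(A)\le c\,f(r(A))$ for \emph{all} balls $A$ with $c$ absolute. The mass-distribution scheme sketched above leaves the ratio $\mu(L)/\HH^\gamma(L)$ over construction balls free to grow as one descends -- precisely because $f(r)/r^\gamma\to\infty$ -- so $c$ would end up depending on $B'$; stabilising it forces one to take the efficiency losses $\varepsilon=\varepsilon_n$ summable and, more delicately, to refine which transformed balls are selected at each level and how mass is apportioned (roughly, to make the chosen $B_{j_k}^f$ repack the appropriate ball efficiently enough that $\mu$ is $f$-regular at every scale, not merely at the scales occurring in the construction). This is the technical heart of the Beresnevich--Velani argument, and it is here that the monotonicity of $r\inv[\gamma]f(r)$ is used essentially. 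By contrast the degenerate cases, the extraction of a finite near-cover of each $L$ from a full-measure $\limsup$ family (immediate here from the nested/disjoint dichotomy, so that no Vitali- or $5r$-type covering lemma is needed), and the bookkeeping of constants should be routine.
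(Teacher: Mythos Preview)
The paper does not prove this theorem. It appears in Section~3 (``Toolbox'') as a quoted result, introduced as ``the following generalisation of the Mass Transference Principle, Theorem~3 in \cite{MR2259250}, but slightly simplified to the current setting,'' and no proof is given. So there is nothing to compare your argument against: the paper simply imports the Beresnevich--Velani theorem as a black box and applies it at the end of Section~4.

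Your proposal is, in outline, a faithful sketch of the Beresnevich--Velani proof specialised to the ultrametric Cantor set $(\CC,\HH^\gamma)$, and you correctly note that the ultrametric nested/disjoint dichotomy removes the need for any Vitali-type covering step. You are also honest that the crux --- obtaining a mass distribution $\mu$ with $\mu(A)\le c\,f(r(A))$ for \emph{all} balls $A$ with an \emph{absolute} constant $c$ --- is not actually carried out in your sketch; the naive scheme you describe really does let $c$ drift with the level, and fixing this is precisely the delicate ``$K_{G,B}$'' construction and the careful level-by-level selection in \cite{MR2259250}. So as a proof your proposal has a declared gap at its heart; as a plan it is the right one, and the shortest route to a complete argument is to verify that $(\CC,d,\HH^\gamma)$ satisfies the hypotheses of the metric-space Mass Transference Principle (which it does, by Proposition~\ref{prop:1}: $\HH^\gamma(B\cap\CC)=r(B)^\gamma$ for every ball $B$ meeting $\CC$) and then cite \cite{MR2259250} --- exactly what the paper does.
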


We will also need the theory of continued fractions over formal Laurent series as first studied by Artin in \cite{MR1544651}. Every rational function $\frac{g}{h}$ can be written uniquely as a finite continued fraction
    \[
        \dfrac{g}{h} = a_0 +    \cfrac{1}{a_1 + 
                                \cfrac{1}{ \ddots +
                                \cfrac{1}{a_n} }}
                    = [a_0; a_1, \dots, a_n]
    \]
with $a_0, a_1, \dots, a_n \in \FF[X]$ and $\deg(a_1), \dots, \deg(a_n) \geq 1$.
In a similar way every $x \in \FF((X\inv)) \setminus \FF(X)$ can uniquely be written as an infinite continued fraction
    \[
        x = a_0 +   \cfrac{1}{a_1 +
                    \cfrac{1}{a_2 +  
                    \cfrac{1}{ \ddots }}}
                    = [a_0; a_1, a_2, \dots]
    \]
with $a_i \in \FF[X]$ for all $i \geq 0$, and $\deg(a_i) \geq 1$ for $i \geq 1$. We call the polynomials $a_i$ the partial quotients of $x$, and the rational functions
    \[
        \dfrac{P_j}{Q_j} = [a_0; a_1, \dots, a_j]
    \]
the convergents to $x$.

Furthermore, from the ultrametric property on $\FF((X\inv))$, we have that
    \[
        \abs{x - \dfrac{P_j}{Q_j}} = \dfrac{1}{\abs{a_{j+1}} \abs{Q_j}^2}
    \]
for all the convergents.

We will also need the Folding Lemma, Proposition 2 in \cite{MR1149740}.

\begin{lem}[Folding Lemma]
    If $\frac{g}{h} = [a_0; a_1, \dots, a_n]$ is a rational function, and $t$ is a polynomial with $\deg(t) \geq 1$, then
    \[
        \dfrac{g}{h} + \dfrac{(-1)^n}{th^2} =
        [a_0; a_1, \dots, a_n, t, -a_n, \dots, -a_1].
    \]
\end{lem}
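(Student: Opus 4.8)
The plan is to use the standard $2\times 2$ matrix encoding of continued fractions and reduce the identity to a determinant computation. Recall that the convergents $\frac{P_j}{Q_j}$ of $\frac{g}{h}=[a_0;a_1,\dots,a_n]$ satisfy $P_j=a_jP_{j-1}+P_{j-2}$ and $Q_j=a_jQ_{j-1}+Q_{j-2}$, which is compactly recorded by
\[
    \begin{pmatrix} P_n & P_{n-1} \\ Q_n & Q_{n-1}\end{pmatrix}
    = \prod_{i=0}^{n}\begin{pmatrix} a_i & 1 \\ 1 & 0\end{pmatrix}
    =: M,
\]
and that $\det M=(-1)^{n+1}$, i.e.\ $P_nQ_{n-1}-P_{n-1}Q_n=(-1)^{n+1}$ (a product of $n+1$ factors, each of determinant $-1$). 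Here I take $g=P_n$ and $h=Q_n$ to be the actual numerator and denominator of the last convergent; the identity is sensitive to this normalisation because of the $h^2$ term, but over $\FF((X\inv))$ the partial quotients determine $P_n,Q_n$ as specific polynomials with no scalar ambiguity. Since every string of partial quotients of degree $\geq 1$ (from index $1$ onwards) gives a legitimate and unique continued fraction, and $\deg(-a_i)=\deg(a_i)\geq 1$ together with $\deg t\geq 1$, the right-hand side is automatically the canonical expansion of its value; so it suffices to show that this value equals $\frac{g}{h}+\frac{(-1)^n}{th^2}$.

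First I would write the value of $[a_0;a_1,\dots,a_n,t,-a_n,\dots,-a_1]$ as the ratio of the first-column entries of the matrix product $M\,T\,R$, where $T=\begin{pmatrix} t & 1 \\ 1 & 0\end{pmatrix}$ and $R=\prod_{i=n}^{1}\begin{pmatrix} -a_i & 1 \\ 1 & 0\end{pmatrix}$ (in decreasing order) is the matrix of the folded tail. The key step is to express $R$ through $M$ itself. Each factor is symmetric, so the reversed product of the $\begin{pmatrix} a_i & 1 \\ 1 & 0\end{pmatrix}$ is the transpose $A^{T}$ of $A:=\prod_{i=1}^{n}\begin{pmatrix} a_i & 1 \\ 1 & 0\end{pmatrix}$; combining this with the conjugation-and-sign identity $\begin{pmatrix} -a & 1 \\ 1 & 0\end{pmatrix}=-D\begin{pmatrix} a & 1 \\ 1 & 0\end{pmatrix}D$, with $D=\begin{pmatrix} 1 & 0 \\ 0 & -1\end{pmatrix}$ and $D^2=I$, yields $R=(-1)^n D A^{T} D$. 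Writing $A=\begin{pmatrix} a_0 & 1 \\ 1 & 0\end{pmatrix}^{-1}M$ then expresses the entries of $R$ entirely in terms of $P_n,P_{n-1},Q_n,Q_{n-1}$.

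Finally I would multiply out only the first column of $M\,T\,R$. A direct computation gives top entry $(-1)^n tP_nQ_n+1$ and bottom entry $(-1)^n tQ_n^2$, where the $+1$ and the internal cancellations come precisely from the determinant identity $P_{n-1}Q_n-P_nQ_{n-1}=(-1)^n$ and from $Q_{n-1}Q_n-Q_nQ_{n-1}=0$. Dividing and using $\frac{1}{(-1)^n}=(-1)^n$ gives
\[
    \frac{(-1)^n tP_nQ_n+1}{(-1)^n tQ_n^2}
    =\frac{P_n}{Q_n}+\frac{(-1)^n}{tQ_n^2}
    =\frac{g}{h}+\frac{(-1)^n}{th^2},
\]
which is the claim. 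The main obstacle is purely the sign bookkeeping: getting the conjugation identity and the reversal-by-transpose exactly right, and tracking the parity of $n$ through the determinant so that the final numerator is $+1$ rather than $-1$. Over $\FF$ one has $-1=2$, so these signs are genuine field elements and must be handled honestly rather than absorbed.
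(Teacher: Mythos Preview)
The paper does not give its own proof of the Folding Lemma; it merely cites Proposition~2 of van der Poorten--Shallit \cite{MR1149740}. Your matrix argument is correct and is essentially the standard proof from that reference: the product-of-matrices encoding of convergents, the conjugation identity $\bigl(\begin{smallmatrix}-a&1\\1&0\end{smallmatrix}\bigr)=-D\bigl(\begin{smallmatrix}a&1\\1&0\end{smallmatrix}\bigr)D$ together with the reversal-as-transpose observation to handle the folded tail, and the determinant relation $P_{n-1}Q_n-P_nQ_{n-1}=(-1)^n$ producing the extra $1$ in the numerator are all used exactly as in the original, and your first-column computation of $MTR$ checks out.
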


\section{Proof of Theorem \ref{thm:main}}

\subsection*{Convergent case:}

Since
    \[
        \sum_{n = 1}^{\infty}{f(\psi(3^n)) \times (3^n)^\gamma}<\infty,
    \]
and since $f$ is a dimension function, we have that $\psi(3^n) \to 0$ as $n \to \infty$.

Let $\rho > 0$ be given. Then there exists an integer $N_\rho$, such that
    \begin{equation}
    \label{eq:4}
        \psi(3^n) \leq \rho \text{ for all }n \geq N_\rho.
    \end{equation}
Furthermore we may choose $N_\rho$ such that $N_\rho\to \infty$ as $\rho \to 0$.

We can now cover $W_\CC(\psi)$ by the countable collection of balls
    \[
        W_\CC(\psi) \subseteq \bigcup_{N\geq N_\rho} A_N
            = \bigcup_{N\geq N_\rho} \bigcup_{g \in \mathcal{F}(N)}
              B \left( \dfrac{g}{X^N},\psi(3^N)\right),
    \]
each having radius $<\rho$ by \eqref{eq:4}. Hence
    \begin{align*}
        \inf_{\mathcal{B} \in \mathcal{B}_\rho}\sum_{B_i \in \mathcal{B}}    {f(r(B_i))}
            &\leq \sum_{N\geq N_\rho} \sum_{g \in \mathcal{F}(N)}     f(\psi(3^N))\\
            &= \sum_{N\geq N_\rho}f(\psi(3^N)) \times \card{\mathcal{F}(N)}\\
            &\overset{\eqref{eq:1}}{=} \sum_{N\geq N_\rho}f(\psi(3^N)) \times (3^N)^\gamma \to 0
    \end{align*}
as $\rho \to 0$. So we have that
    \[
        \HH^f(W_\CC(\psi))=0
    \]
in this case.

\subsection*{Divergent case:}

To simplify the notation we let $\mu$ be the Hausdorff $\gamma$-measure restricted to $\CC$, that is
    \[
        \mu(A)=\HH^\gamma(A \cap \CC)
    \]
for every Borel set $A$.

Furthermore we define
    \[
        W_\CC^*(\psi) = \set{ h \in \CC :
         \abs{ h - \dfrac{g}{X^N}} < \psi(3^N),
        \text{ for infinitely many } N \in \NN, \text{ where } g \in \mathcal{F}^*(N) },
    \]
where
    \[
        \mathcal{F}^*(N) = \set{ f \in \FF[X] :
        \Coeff(f) \subseteq \set{0,2}, \deg f < N \text{ and } f(0)=2}.
    \]

We note that
    \begin{equation}
    \label{eq:5}
        \card{\mathcal{F}^*(N)}= 2^{N-1},
    \end{equation}
and that just like before
    \[
        W_\CC^*(\psi) = \set{ h \in \CC :
        h \in \bigcup_{g \in \mathcal{F}^*(N)}
                B \left( \dfrac{g}{X^N} , \psi(3^N) \right)
                \text{ for infinitely many } N \in \NN },
    \]
when expressed in terms of balls instead of approximation, and
    \[
        W_\CC^*(\psi)
        = \limsup_{N \to \infty} A_N^*
        = \set{ f \in \CC : f \in A_N^* \text{ for infinitely many } N \in \NN },
    \]
where
    \[
        A_N^* = \bigcup_{g \in \mathcal{F}^*(N)} B \left( \dfrac{g}{X^N}, \psi(3^N)\right).
    \]

Proving the divergent part of the theorem, but with $W_\CC^*(\psi)$ instead of $W_\CC(\psi)$, proves the result since
    \[
        W_\CC^*(\psi) \subseteq W_\CC(\psi) \subseteq \CC,   
    \]
so we do that.

First, we prove the divergent part of the theorem in the special case when the dimension function $f$ is just the function $r \mapsto r^\gamma$, that is the following theorem:

\begin{thm}\label{thm:1}
$\mu(W_\CC^*(\psi)) = \mu(\CC) = 1$ if $\sum_{n=1}^{\infty}{(\psi(3^n)\times 3^n)^\gamma}=\infty$.
\end{thm}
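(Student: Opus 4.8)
The plan is to apply the divergence Borel--Cantelli lemma (Lemma~\ref{BC}) to the sequence $(A_N^*)$ with respect to the finite measure $\mu$. The first ingredient is the measure of a single set $A_N^*$. Write $\psi(3^N)=3^{-r_N}$; for $g\in\mathcal{F}^*(N)$ the rational function $g/X^N$ lies in $\CC$, and the condition $g(0)=2$ forces its $X^{-N}$-coefficient to be $2$, so that $B(g/X^N,\psi(3^N))\cap\CC$ is the level-$r_N$ cylinder cut out by the first $r_N$ coefficients of $g/X^N$, which by Proposition~\ref{prop:1} has $\mu$-measure $\psi(3^N)^\gamma$. When $3^N\psi(3^N)\le 1$ (i.e.\ $r_N\ge N$) distinct $g$ produce distinct, hence disjoint, cylinders, so by \eqref{eq:5}
\[
    \mu(A_N^*)=2^{N-1}\psi(3^N)^\gamma=\tfrac12\bigl(3^N\psi(3^N)\bigr)^\gamma;
\]
when $3^N\psi(3^N)>1$ the cylinders already exhaust $\CC$ and $\mu(A_N^*)=\mu(\CC)=1$. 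If the latter occurs for infinitely many $N$ then $\limsup_N A_N^*$ has full $\mu$-measure and there is nothing more to prove, so I may assume it occurs for only finitely many $N$ and, after discarding an initial segment (which alters neither $\limsup_N A_N^*$ nor the divergence of the series), that $\mu(A_N^*)=\tfrac12(3^N\psi(3^N))^\gamma$ for every $N$; in particular $\sum_N\mu(A_N^*)=\infty$.

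The key step is a quasi-independence estimate: $\mu(A_n^*\cap A_m^*)\le\mu(A_n^*)\mu(A_m^*)$ whenever $n<m$. To prove it I would decompose $A_n^*\cap\CC$ into its disjoint level-$r_n$ cylinders $C_g$, $g\in\mathcal{F}^*(n)$, and bound $\mu(C_g\cap A_m^*\cap\CC)$ individually. Every level-$r_m$ cylinder making up $A_m^*\cap\CC$ forces the $X^{-m}$-coefficient to be $2$ (again because $g(0)=2$ for $g\in\mathcal{F}^*(m)$), whereas the coefficients of $C_g$ at positions $n+1,\dots,r_n$ are all $0$, so $C_g$ forces the $X^{-m}$-coefficient to be $0$ as soon as $n<m\le r_n$. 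Consequently $C_g\cap A_m^*\cap\CC=\emptyset$ when $r_n\ge m$, while when $r_n<m$ a direct count of which level-$r_m$ cylinders of $A_m^*$ lie inside $C_g$ shows $\mu(C_g\cap A_m^*\cap\CC)=\mu(C_g)\mu(A_m^*)$. Summing over $g$ and using $\sum_g\mu(C_g)=\mu(A_n^*)$ gives the estimate in both cases.

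Putting $S_N=\sum_{n=1}^N\mu(A_n^*)$, the estimate of the previous paragraph gives $\sum_{n,m=1}^N\mu(A_n^*\cap A_m^*)\le S_N+S_N^2$, whence Lemma~\ref{BC} yields
\[
    \mu(W_\CC^*(\psi))=\mu\bigl(\limsup_{N\to\infty}A_N^*\bigr)\ge\limsup_{N\to\infty}\frac{S_N^2}{S_N+S_N^2}=1
\]
since $S_N\to\infty$, and the reverse inequality is trivial because $W_\CC^*(\psi)\subseteq\CC$ and $\mu(\CC)=1$. I expect the quasi-independence step to be the main obstacle: it requires careful bookkeeping, across the two scales $n,m$ and the two radii $3^{-r_n},3^{-r_m}$, of exactly which coefficients each cylinder fixes, together with the realisation that the defining property $g(0)=2$ of $\mathcal{F}^*$ is precisely what forces the cross terms to vanish or to factor; a little care is also needed with the convention relating a ball of radius $3^{-\ell}$ to its first $\ell$ coefficients.
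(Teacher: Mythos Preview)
Your proposal is correct and follows essentially the same route as the paper: reduce to $\psi(3^n)\le 3^{-n}$, compute $\mu(A_n^*)$ via Proposition~\ref{prop:1} and the count \eqref{eq:5}, establish the quasi-independence $\mu(A_n^*\cap A_m^*)\le\mu(A_n^*)\mu(A_m^*)$ by the same two-case split (empty intersection when $m\le r_n$ via the forced coefficient at position $m$, exact factorisation when $m>r_n$ via the cylinder count), and conclude with Lemma~\ref{BC}. The only cosmetic differences are that the paper replaces $\psi$ by $\Psi(r)=\min(r^{-1},\psi(r))$ rather than discarding an initial segment, and that your Borel--Cantelli bookkeeping (bounding the double sum by $S_N+S_N^2$) is slightly more explicit about the diagonal terms than the paper's.
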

\begin{proof}

The proof is divided into six steps.
\begin{itemize}
\item[i)]

Without loss of generality we may assume that
    \begin{equation}
    \label{eq:2}
        \psi(3^n)\leq 3^{-n} \text{ for all }n \in \NN.
    \end{equation}

If that was not the case, the function $\Psi$, defined by $\Psi(r)=\min\set{r\inv,\psi(r)}$, would satisfy \eqref{eq:2}. Furthermore if $\Psi(3^n)=3\inv[n]$ infinitely often, we have that
    \[
        \sum_{n=1}^{\infty}{(\Psi(3^n)\times 3^n)^\gamma}=\infty.
    \]

On the other hand if $\Psi(3^n)=3\inv[n]$ only a finite number of times,
    \[
        \sum_{n=1}^{\infty}{(\Psi(3^n)\times 3^n)^\gamma} \geq \sum_{n=N}^{\infty}{(\Psi(3^n)\times 3^n)^\gamma} = \sum_{n=N}^{\infty}{(\psi(3^n) \times 3^n)^\gamma}=\infty,
    \]
for $N$ sufficiently large. Since $W_\CC^*(\Psi) \subseteq W_\CC^*(\psi)$, we could just prove the theorem with $\Psi$ instead of $\psi$.
   
\item[ii)]

Let $g,h \in \mathcal{F}^*(n)$ be different. Then
    \[
        \abs{ \dfrac{g}{X^n} - \dfrac{h}{X^n} } =
        \abs{ \dfrac{ g - h }{X^n} } \geq
        3\inv[n] \geq \psi(3^n),    
    \]
and hence
    \[
        B \left( \dfrac{g}{X^n}, \psi(3^n)\right) \cap
        B \left( \dfrac{h}{X^n}, \psi(3^n)\right) = \emptyset
    \]
due to the ultrametric property. This implies that $A_n^*$ is a disjoint union
    \[
        A_n^* = \bigsqcup_{g \in \mathcal{F}^*(n)}
            B \left( \dfrac{g}{X^n}, \psi(3^n)\right)
    \]
for every $n \in \NN$.

\item[iii)]

For any ball $B$ with $r(B) = 3\inv[\ell] \leq 1$ and $B \cap \CC \neq \emptyset$, if $n > \ell$, then
    \begin{equation}\label{eq:6}
        \card{ \set{ g \in \mathcal{F}^*(n) :
            B \left( \dfrac{g}{X^n}, \psi(3^n)\right) \subseteq B } }    = 2^{n-\ell -1}.
    \end{equation}
   
This follows since any polynomial $g \in \mathcal{F}^*(n)$ has the coefficient $a_0 = 2$ and coefficients $a_{n-1}, \dots, a_{1}$ either $0$ or $2$. The requirement that the ball $B \left( \frac{g}{X^n}, \psi(3^n)\right)$ is contained in $B$ fixes the coefficients $a_{n-1}, \dots, a_{n-\ell}$. The remaining $n- \ell -1$ coefficients can be either $0$ or $2$ giving $2^{n-\ell-1}$ elements in the set.

\item[iv)]

We can now, under the assumptions of iii), compute
    \begin{align*}
        \mu( B \cap A_n^*) &= \mu\left( \bigsqcup_{g \in \mathcal{F}^*(n)}
            B \cap B \left( \dfrac{g}{X^n}, \psi(3^n)\right) \right)\\            
                &=  \sum_{g \in \mathcal{F}^*(n)}
            \mu\left( B \cap B \left( \dfrac{g}{X^n}, \psi(3^n)\right) \right)\\           
                &=  \sum_{\substack{ g \in \mathcal{F}^*(n) \\ 
                    B \left( \frac{g}{X^n}, \psi(3^n)\right) \subseteq B }}
            \mu\left( B \left( \dfrac{g}{X^n}, \psi(3^n)\right) \right)\\
                &=   \sum_{\substack{ g \in \mathcal{F}^*(n) \\ 
                    B \left( \frac{g}{X^n}, \psi(3^n)\right) \subseteq B }}
            \psi(3^n)^\gamma\\
                &\overset{\eqref{eq:6}}{=} 2^{n-\ell-1} \psi(3^n)^\gamma                    
    \end{align*}
where we have used Proposition \ref{prop:1}. Since
    \[
        2^{n-\ell-1} = (3^n)^\gamma \times (3\inv[\ell])^\gamma    \times 3\inv[\gamma]
    \]
we have that
    \begin{equation}
    \label{eq:3}
        \mu( B \cap A_n^*)
        = r(B)^\gamma \times (\psi(3^n) \times 3^n)^\gamma \times 3\inv[\gamma].
    \end{equation}

For $B = \II$ it follows that
    \begin{equation}\label{eq:9}
        \mu( A_n^*) = (\psi(3^n) \times 3^n)^\gamma \times 3\inv[\gamma]
    \end{equation}
for all $n \in \NN$, and hence
    \begin{equation}\label{eq:8}
        \sum_{n = 1}^{\infty}
        \mu( A_n^*) = \infty.
    \end{equation}

\item[v)]

We have the following quasi-independence result
\begin{prop}
    For $n > m$ we have
        \begin{equation}
        \label{eq:7}
            \mu( A_m^* \cap A_n^*) \leq \mu( A_m^*) \mu( A_n^*)       
        \end{equation}
       
\end{prop}

\begin{proof}

Let $\psi(3^m)=3\inv[\ell]$. If $n \leq \ell$, then $3\inv[n] \geq 3\inv[\ell]=\psi(3^m)$ and from \eqref{eq:2} we have $3\inv[n] \geq \psi(3^n)$. Let $g \in \mathcal{F}^*(n), h \in \mathcal{F}^*(m)$. Since $g - h X^{n-m}$ evaluated in $0$ is $2$, we have that $g - h X^{n-m} \neq 0$, and hence
    \[
        \abs{ \dfrac{g}{X^n} - \dfrac{h}{X^m}} =
        \abs{ \dfrac{g-hX^{n-m}}{X^n} }    \geq 3\inv[n].
    \]

From this we get that
    \[
        B \left( \dfrac{g}{X^n}, \psi(3^n)\right) \cap
        B \left( \dfrac{h}{X^m}, \psi(3^m)\right) = \emptyset,
    \]
and by definition of $A_n^*$ and $A_m^*$ we then have
    \[
        A_n^* \cap A_m^* = \emptyset,   
    \]
which implies that
    \[
        \mu ( A_n^* \cap A_m^* ) = 0
    \]
and the quasi-independence is trivially satisfied.

If $n > \ell$ we have
    \begin{align*}
        \mu (A_m^* \cap A_n^*) &=
            \mu \left(
                \bigsqcup_{g \in \mathcal{F}^*(m)} \left(
                    B \left( \dfrac{g}{X^m}, \psi(3^m) \right)
                \cap A_n^* \right)
            \right)\\
        &=  \sum_{g \in \mathcal{F}^*(m)}
            \mu \left(
                    B \left( \dfrac{g}{X^m}, \psi(3^m) \right)
                \cap A_n^* 
            \right)\\
        &\overset{\eqref{eq:3}}{=}  \sum_{g \in \mathcal{F}^*(m)}
            (\psi(3^m))^\gamma \times (\psi(3^n) \times 3^n)^\gamma
                               \times 3\inv[\gamma]    \\
        &\overset{\eqref{eq:5}}{=}  2^{m-1} \times (\psi(3^m))^\gamma \times
            (\psi(3^n) \times 3^n)^\gamma \times 3\inv[\gamma]\\
        &=  \Big( 3\inv[\gamma] \times (\psi(3^m) \times 3^m)^\gamma \Big) \times
            \Big( 3\inv[\gamma] \times (\psi(3^n) \times 3^n)^\gamma \Big) \\
        &\overset{\eqref{eq:9}}{=}  \mu ( A_m^* ) \times \mu(A_n^*).
    \end{align*}

This concludes the proof of the quasi-independence.
\end{proof}

\item[vi)]

From \eqref{eq:8} we can use Lemma \ref{BC} to get

    \[
        \mu(W_\CC^*(\psi)) \geq
        \limsup_{N \to \infty}\dfrac
        {\left( \sum_{k = 1}^{N} \mu(A_k^*) \right)^2}
        {\sum_{n,m = 1}^{N}  \mu(A_n^* \cap A_m^*)} \geq
        \limsup_{N \to \infty} 1 = 1,
    \]
where the last inequality comes from the quasi-independence. Since we trivially have
    \[
        1 = \mu(\CC) \geq \mu(W_\CC^*(\psi)),
    \]
the result follows.   
\end{itemize}
\end{proof}

We will now deduce the diverging part of Theorem \ref{thm:main} from Theorem \ref{thm:1} by a standard application of the Mass Transference Principle.

\begin{proof}
Without loss of generality we will assume that $\psi(3^n) \to 0$ when $n \to \infty$, since else $W_\CC^*(\psi) = \CC$ and the result is clear. By assumption we have that
    \[
        \sum_{n = 1}^{\infty}
        f(\psi(3^n)) \times (3^n)^\gamma = \infty
    \]
and $r \mapsto r\inv[\gamma] f(r)$ is monotonic. Define $\theta$ by $\theta(r) = \lceil f(\psi(r))^{1/\gamma} \rceil_3$, where $\lceil \cdot \rceil_3$ is the function that rounds up to the nearest power of 3.

Then
    \[
        \sum_{n = 1}^{\infty}
        (\theta(3^n) \times 3^n)^\gamma = \infty,
    \]
and from Theorem \ref{thm:1} we get that $\mu(W_\CC^*(\theta)) = \mu(\CC) = 1$. This in turn implies that
    \[
        \mu(B \cap W_\CC^*(\theta)) = \mu(B \cap \CC)
    \]
for any ball $B \subseteq \CC$. Now Theorem \ref{GMTP} gives
    \[
        \HH^f(B \cap W_\CC^*(\psi)) = \HH^f(B \cap \CC)
    \]
for any ball $B \subseteq \CC$. In particular for $B = \CC$ we get the desired result.   
\end{proof}

\section{Irrationality exponent}
For an element $\xi \in \FF((X\inv))$ we define the irrationality exponent of $\xi$ as
    \[
        \tau(\xi) = \sup \set{ \tau : \abs{ \xi - \dfrac{g}{h} } < \abs{h}\inv[\tau]
            \text{ for infinitely many } \dfrac{g}{h} \in \FF(X)}.
    \]

From Dirichlet's theorem in the field of formal Laurent series we get that $\tau(\xi) \geq 2$ for all $\xi \in \FF((X\inv))$.

Furthermore for $\psi : \set{3^n:n \in \NN} \to \RR_+$ a non-increasing function we define
    \[
        \mathcal{K}(\psi) = \set{ \xi \in \II :
         \abs{ \xi - \dfrac{g}{h}} < \psi(\abs{h}),
        \text{ for infinitely many } \dfrac{g}{h} \in \FF(X) }.
    \]
   
We now have the following theorem.
\begin{thm}\label{thm:2}
    Let $\psi : \set{3^n:n \in \NN} \to \RR_+$ be a non-increasing function such that \break$x \mapsto x^2 \psi(x)$ is non-increasing and tends to $0$ as $3^n$ tends to infinity. For any $c \in (0,\frac{1}{3})$ the set
    \[
        \mathcal{K}(\psi) \setminus \mathcal{K}(c\psi) \cap \CC
    \]
is uncountable.
\end{thm}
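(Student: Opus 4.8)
The plan is to adapt Bugeaud's construction from \cite{MR2399165} to the ultrametric setting, building elements of $\CC$ with prescribed approximation behaviour by iterating the Folding Lemma. First I would fix a target $\xi \in \CC$ by specifying its continued fraction expansion $\xi = [0; a_1, a_2, \dots]$ with $a_i \in \FF[X]$, $\deg a_i \geq 1$, chosen recursively so that (a) the convergents $P_j/Q_j$ satisfy membership in $\CC$ — i.e.\ their Laurent coefficients lie in $\{0,2\}$ — which I would arrange by using the Folding Lemma starting from a seed rational function already in $\CC$, and (b) the degrees $\deg Q_j$ grow at a rate dictated by $\psi$: specifically I would want a subsequence of convergents with $\abs{\xi - P_j/Q_j} \approx \psi(\abs{Q_j})$ so that $\xi \in \mathcal{K}(\psi)$, while controlling the \emph{next} partial quotient $a_{j+1}$ so that the gap $\abs{\xi - P_j/Q_j} = \abs{a_{j+1}}^{-1}\abs{Q_j}^{-2}$ is not small enough to land in $\mathcal{K}(c\psi)$, using that between consecutive convergents no better rational approximation exists (the standard ultrametric best-approximation property of continued fractions).

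The key technical device is the Folding Lemma: if $P_n/Q_n = [0; a_1, \dots, a_n]$ lies in $\CC$ and $t$ is a polynomial of degree $\geq 1$, then $P_n/Q_n + (-1)^n/(t Q_n^2) = [0; a_1, \dots, a_n, t, -a_n, \dots, -a_1]$, and by choosing $t$ appropriately (in fact $t = \pm X^{d}$ for suitable $d$, noting that over $\FF$ we have $-1 = 2$, so sign issues interact nicely with the coefficient set $\{0,2\}$) I can guarantee the new rational function still has all Laurent coefficients in $\{0,2\}$. Iterating this produces an infinite continued fraction whose limit $\xi$ is in $\CC$, with a completely explicit sequence of convergents and hence explicit control of $\deg Q_n$ and of the approximation quality. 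To get \emph{uncountably many} such $\xi$, at each folding step where $\psi$ leaves genuine freedom in the choice of $\deg t$ (which it does, since $x^2\psi(x) \to 0$ forces infinitely many steps with a range of admissible degrees spanning more than the factor $1/c$), I would make a binary choice, so that distinct choice-sequences yield distinct elements; this gives a Cantor-like tree of size $2^{\aleph_0}$ inside $\mathcal{K}(\psi) \setminus \mathcal{K}(c\psi) \cap \CC$.

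Concretely the steps are: (1) establish a "folding preserves $\CC$" lemma, identifying which polynomials $t$ keep the Laurent coefficients in $\{0,2\}$ — here I expect to need that appending $t, -a_n, \dots, -a_1$ to the expansion changes the tail of the Laurent series only beyond degree $-\deg Q_n$, so the already-fixed good coefficients are untouched, and the new coefficients can be forced into $\{0,2\}$ by the choice of $t$; (2) given $\psi$ with $x \mapsto x^2\psi(x)$ non-increasing and $\to 0$, translate the requirement $\abs{\xi - P_n/Q_n} < \psi(\abs{Q_n})$ but $\abs{\xi - P_n/Q_n} \geq c\,\psi(\abs{Q_n})$ into a constraint $\deg t \in [\,\alpha_n, \alpha_n + \log_3(1/c)\,)$ on the folding degree at infinitely many steps, and check this interval is nonempty and, being of length $\log_3(1/c) > 1$, contains at least two integers so a binary choice survives; (3) verify that no intermediate convergent (the $-a_n, \dots, -a_1$ block) accidentally gives a better approximation that would place $\xi$ in $\mathcal{K}(c\psi)$ after all — this uses monotonicity of $x^2\psi(x)$ to compare $\psi$ at the various denominator sizes; (4) assemble the tree and argue distinctness of leaves.

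The main obstacle I anticipate is step (3) together with the interaction in step (2): after a folding step the continued fraction acquires a whole block of new partial quotients $t, -a_n, \dots, -a_1$, each producing its own convergent, and I must ensure that \emph{none} of these auxiliary convergents $P/Q$ satisfies $\abs{\xi - P/Q} < c\psi(\abs{Q})$ — i.e.\ the exclusion from $\mathcal{K}(c\psi)$ is a statement about all rationals, not just the chosen subsequence. Handling this requires a careful bookkeeping of the denominator degrees appearing in the folded expansion (they come in the symmetric pattern $\deg Q_n, \deg Q_n + \deg t, \dots$) and using the hypothesis that $x^2\psi(x)$ is non-increasing to reduce the infinitely-many-conditions to the behaviour at the "record" denominators only; I would also need the elementary ultrametric fact that any rational $g/h$ with $\abs{\xi - g/h}$ small must be a convergent of $\xi$, so that checking convergents suffices. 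The arithmetic of $\FF$ (characteristic $3$, and $-a = 2a$ when $a \in \{0,2\}$-coefficient polynomials) should make the sign flips in the Folding Lemma harmless rather than troublesome, which is one of the "simplifications of working over an ultrametric field" the introduction alludes to.
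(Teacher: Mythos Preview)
Your plan is essentially the paper's proof: iterate the Folding Lemma with $t = X^{u_i}$, use $-1=2$ in $\FF$ to keep the resulting Laurent series in $\CC$, calibrate the $u_i$ against $\psi$ via the relation $\abs{\xi - P_j/Q_j} = \abs{a_{j+1}}^{-1}\abs{Q_j}^{-2}$, and handle the intermediate convergents produced by the folded blocks using the monotonicity of $x^2\psi(x)$ exactly as you outline in step~(3). The paper in fact chooses $u_{i+1}$ uniquely by $1 < 3^{u_{i+1}+2v_i}\psi(3^{v_i}) \le 3$ and then bounds $\abs{a_{j+1}}\abs{Q_j}^2\psi(\abs{Q_j}) \le 3$ for every $j$ in the block $2^{i-1}\le j < 2^i$, which is precisely your step~(3).

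The one point of divergence is the uncountability device. The paper does \emph{not} exploit freedom in $\deg t$ at the main folding steps; instead it intersperses extra folds of degree $1$ or $2$ between them (setting $u'_{2n}=u_n$, $u'_{2n+1}\in\{1,2\}$) and reruns the same estimates. Your alternative of making a binary choice in $\deg t$ is reasonable, but the justification you give --- that a half-open interval of length $\log_3(1/c)>1$ contains at least two integers --- is false as stated (e.g.\ $[0.9,2)$ has length $1.1$ and contains only one integer), so that step would need patching; the interspersing trick sidesteps this entirely.
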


From Theorem \ref{thm:2} we get the following result.

\begin{cor}
    For any $\tau \in [2,\infty]$ there exist uncountably many elements in $\CC$ with irrationality exponent $\tau$.
\end{cor}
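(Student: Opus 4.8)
The plan is to read the corollary off from Theorem~\ref{thm:2} by choosing, for each prescribed value $\tau\in[2,\infty]$, an approximating function $\psi=\psi_\tau$ satisfying the hypotheses of that theorem and such that every element of $(\mathcal{K}(\psi_\tau)\setminus\mathcal{K}(c\psi_\tau))\cap\CC$ has irrationality exponent exactly $\tau$; Theorem~\ref{thm:2} then guarantees this set is uncountable, which is precisely the assertion. Fix once and for all some $c\in(0,\tfrac13)$, say $c=\tfrac14$. The mechanism is always the same: membership in $\mathcal{K}(\psi_\tau)$ forces a lower bound on $\tau(\xi)$, while \emph{non}-membership in $\mathcal{K}(c\psi_\tau)$ forces an upper bound, and for a suitable $\psi_\tau$ the two bounds meet. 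In all cases one also uses the elementary fact that, within distance $<1$ of a fixed $\xi$, there are only finitely many rationals $g/h\in\FF(X)$ of bounded denominator, so that ``infinitely many $g/h$'' always entails $\abs{h}\to\infty$ along the relevant sequence.

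For $\tau\in(2,\infty)$ I would take $\psi_\tau(x)=x^{-\tau}$, i.e.\ $\psi_\tau(3^n)=3^{-\tau n}$. This is non-increasing, and $x\mapsto x^2\psi_\tau(x)=x^{2-\tau}$ is non-increasing and tends to $0$, so Theorem~\ref{thm:2} applies. If $\xi\in\mathcal{K}(\psi_\tau)$ then $\abs{\xi-g/h}<\abs{h}^{-\tau}$ for infinitely many $g/h$, whence $\tau(\xi)\geq\tau$. If moreover $\xi\notin\mathcal{K}(c\psi_\tau)$ then $\abs{\xi-g/h}\geq c\abs{h}^{-\tau}$ for all but finitely many $g/h$; given $\varepsilon>0$ one has $c\abs{h}^{-\tau}>\abs{h}^{-\tau-\varepsilon}$ as soon as $\abs{h}^{\varepsilon}>1/c$, so $\abs{\xi-g/h}<\abs{h}^{-\tau-\varepsilon}$ holds only finitely often, and letting $\varepsilon\to0$ gives $\tau(\xi)\leq\tau$. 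Hence $\tau(\xi)=\tau$ for every $\xi\in(\mathcal{K}(\psi_\tau)\setminus\mathcal{K}(c\psi_\tau))\cap\CC$.

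The two endpoints need a slightly perturbed choice. For $\tau=2$ the function $x^{-2}$ fails the requirement $x^2\psi(x)\to0$, so I would take instead $\psi_2(3^n)=3^{-2n}/(n+1)$: it is non-increasing, $x^2\psi_2(x)=1/(n+1)$ at $x=3^n$ is non-increasing with limit $0$, and Theorem~\ref{thm:2} applies. Dirichlet's theorem already gives $\tau(\xi)\geq2$, while for $\xi\notin\mathcal{K}(c\psi_2)$ the bound $\abs{\xi-g/h}\geq c\abs{h}^{-2}/(\log_3\abs{h}+1)$ valid for cofinitely many $g/h$ yields $\tau(\xi)\leq2$, since $c\abs{h}^{-2}/(\log_3\abs{h}+1)>\abs{h}^{-2-\varepsilon}$ once $c\abs{h}^{\varepsilon}>\log_3\abs{h}+1$, which holds for all large $\abs{h}$; thus $\tau(\xi)=2$. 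For $\tau=\infty$ I would take a super-polynomially decaying $\psi_\infty$, e.g.\ $\psi_\infty(3^n)=3^{-n(n+1)}$, so that $\psi_\infty(x)=x^{-(\log_3 x+1)}$ and $x^2\psi_\infty(x)=3^{-n(n-1)}$ at $x=3^n$ is non-increasing with limit $0$; Theorem~\ref{thm:2} applies, and any $\xi\in\mathcal{K}(\psi_\infty)$ satisfies $\abs{\xi-g/h}<\abs{h}^{-(\deg h+1)}$ infinitely often, so $\abs{\xi-g/h}<\abs{h}^{-\tau'}$ infinitely often for every finite $\tau'$, giving $\tau(\xi)=\infty$ (here the set difference is not even needed).

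Assembling the cases, for every $\tau\in[2,\infty]$ the set $(\mathcal{K}(\psi_\tau)\setminus\mathcal{K}(c\psi_\tau))\cap\CC$ is uncountable by Theorem~\ref{thm:2} and consists entirely of elements of $\CC$ with irrationality exponent $\tau$, which proves the corollary. The only step needing genuine care is the passage from ``$\abs{\xi-g/h}\geq c\psi_\tau(\abs{h})$ for all but finitely many $g/h$'' to ``$\tau(\xi)\leq\tau$'', together with the bookkeeping that the perturbed functions $\psi_2$ and $\psi_\infty$ still satisfy the monotonicity hypothesis while pinning the exponent to exactly $2$ and $\infty$; everything else is direct substitution into the definitions.
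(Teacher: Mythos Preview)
Your proof is correct and follows essentially the same strategy as the paper: deduce the corollary from Theorem~\ref{thm:2} by choosing $\psi(x)=x^{-\tau}$ for $\tau\in(2,\infty)$ and a mild perturbation at $\tau=2$ (the paper takes $\psi(x)=(x\log x)^{-2}$, you take $\psi_2(3^n)=3^{-2n}/(n+1)$; both work for the same reason). You also spell out the deduction $\xi\notin\mathcal{K}(c\psi_\tau)\Rightarrow\tau(\xi)\le\tau$, which the paper leaves entirely implicit.

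The one genuine difference is at $\tau=\infty$. The paper abandons Theorem~\ref{thm:2} here and instead exhibits the explicit Liouville-type element $\sum_{n\ge1}2X^{-n!}\in\CC$, arguing as in the classical real case. Your approach keeps the argument uniform by feeding a super-polynomially decaying $\psi_\infty(3^n)=3^{-n(n+1)}$ into Theorem~\ref{thm:2}; this is cleaner in that it requires no separate construction and already yields uncountably many elements directly, whereas the paper must remark in passing that the Liouville construction can be varied to produce uncountably many.
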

\begin{proof}
    For $\tau \in (2,\infty)$ we can use Theorem \ref{thm:2} with $\psi(x) = x\inv[\tau]$. For $\tau = 2$ we can use the function $\psi(x) = (x \log x)\inv[2]$. Finally for $\tau = \infty$ the element
    \[
        \sum_{n = 1}^{\infty} 2 X^{-n!}   
    \]
has the desired irrationality exponent, since the proof by Liouville for the corresponding real case can be applied. In a similar way we can construct uncountably many with irrationality exponent $\tau = \infty$.
\end{proof}

\begin{proof}[Proof of Theorem \ref{thm:2}]
    Let $u_1 , v_1 = 1$ and define recursively $u_{i+1}$ as the integer satisfying
    \[
        1 < 3^{u_{i+1}} 3^{2v_i} \psi(3^{v_i}) \leq 3,
    \]
    and $v_{i+1}$ by
    \[
        v_{i+1}=u_{i+1}+2v_i.
    \]

From the assumption on $x \mapsto x^2 \psi(x)$ we get that the sequence $\set{u_i}_{i \in \NN}$ is non-decreasing and tends to infinity as $i \to \infty$.

From  $\set{u_i}_{i \in \NN}$ we now construct the following sequence of rational functions:

Let
\begin{align*}
    \xi_{\mathbf{u},1} &= [0;-X^{u_1}] = \dfrac{-1}{X^{u_1}} = \dfrac{P_1}{X^{v_1}} \\
    \xi_{\mathbf{u},2} &= [0;-X^{u_1}, X^{u_2}, X^{u_1}]
            = \dfrac{-1}{X^{u_1}}+\dfrac{-1}{X^{u_2+2u_1}} = \dfrac{P_2}{X^{v_2}} \\
    \xi_{\mathbf{u},3} &= [0;-X^{u_1}, X^{u_2}, X^{u_1}, X^{u_3}, -X^{u_1}, -X^{u_2}, X^{u_1}]
            = \dfrac{-1}{X^{v_1}}+\dfrac{-1}{X^{v_2}}+\dfrac{-1}{X^{v_3}} = \dfrac{P_3}{X^{v_3}} \\
    \vdots
\end{align*}

where the element $\xi_{\mathbf{u},n+1}$ is constructed from $\xi_{\mathbf{u},n}$ by applying the Folding Lemma. Since we are in characteristic $3$, and $\set{v_i}_{i \in \NN}$ is strictly increasing, each of the rational functions is in $\CC$. They converge to the element $\xi_{\mathbf{u},\infty} \in \CC$. Furthermore by construction each of the rational functions $\xi_{\mathbf{u},n}$ is a convergent of $\xi_{\mathbf{u},\infty}$.

We have that
    \[
        \abs{\xi_{\mathbf{u},\infty} - \xi_{\mathbf{u},n}} = \dfrac{1}{3^{v_{n+1}}}
             < \psi(3^{v_n}),
    \]
and hence $\xi_{\mathbf{u},\infty} \in \mathcal{K}(\psi)$.

For the other part it is sufficient to show that all the convergents $\frac{P_j}{Q_j}$ satisfy
    \[
        \abs{\xi_{\mathbf{u},\infty} - \dfrac{P_j}{Q_j}} > c \psi(\abs{Q_j})
    \]
as the convergents are best approximants.

For $2^{i-1} \leq j < 2^i$ we have that $\abs{a_{j+1}} \leq 3^{u_{i+1}}$. Now
    \[
        \abs{\xi_{\mathbf{u},\infty} - \dfrac{P_j}{Q_j}}
            = \dfrac{1}{\abs{a_{j+1}} \abs{Q_j}^2}
    \]
but since
    \[
        \abs{a_{j+1}} \abs{Q_j}^2 \psi(\abs{Q_j})
            \leq 3^{u_{i+1}} \abs{Q_j}^2 \psi(\abs{Q_j})
            \leq 3^{u_{i+1}} 3^{2v_i} \psi(3^{v_i})\leq 3
    \]
we have
    \[
        \abs{\xi_{\mathbf{u},\infty} - \dfrac{P_j}{Q_j}}
            \geq \dfrac{\psi({\abs{Q_j}})}{3}
            > c \psi({\abs{Q_j}})
    \]
and hence $\xi_{\mathbf{u},\infty} \not \in \mathcal{K}(c\psi)$.

In order to get uncountable many elements with the desired property, the sequence $\set{u_i}_{i \in \NN}$ can be modified in the following way. Define $\set{u'_i}_{i \in \NN}$ by $u'_1 = 1, u'_{2n} = u_n$ and $u'_{2n+1} \in \set{1,2}$. By the same proof we get that each of the formal Laurent series $\xi_{\mathbf{u'},\infty} \in \mathcal{K}(\psi) \setminus \mathcal{K}(c\psi) \cap \CC$, and since there is uncountably many such sequences, each giving different formal Laurent series, we have the desired result.
\end{proof}

\section{Concluding remarks}
\label{sec:concluding-remarks}

Let $p$ be a prime, $q = p^n$ for some $n \geq 1$, and $\FFq$ the field with $q$ elements. We can from $\FFq$ construct the polynomials $\FFq[X]$ and the rational functions $\FFq(X)$ with absolute value $\abs{\frac{g}{h}} = q^{\deg g - \deg h}$ for the non-zero rational functions, and $\abs{0} = 0$. Completing with respect to this absolute value gives the formal Laurent series over $\FFq$.

Like before we restrict ourselves to the unit ball, that is elements of the form
    \[
        \sum_{n = 1}^{\infty} a_{-n} X\inv[n] , a_{-n} \in \FFq.
    \]

Let $\mathcal{A} \subseteq \FFq$ with $2 \leq \card{\mathcal{A}} < q$, and construct the missing digit set
    \[
        \MDSA = \set{ \sum_{n = 1}^{\infty} a_{-n} X\inv[n] :
                     a_{-n} \in \mathcal{A} }.
    \]
In the particular the case $q = 3$ and $\mathcal{A} = \set{0, 2}$ we just have $\MDSA = \CC$.

The results of this paper also holds true in the more general setting of missing digit sets, as the proofs can be modified to this situation. We have that the Hausdorff  dimension of $\MDSA$ is $\gamma_\mathcal{A} = \frac{\log \card{\mathcal{A}}}{\log q}$ with $\HH^{\gamma_\mathcal{A}}(\MDSA) = 1$. Furthermore for a function $\psi : \set{ q^n : n \in \NN} \to \set{ q\inv[r] : r \in \ZZ}$ we define the set $W_{\MDSA}(\psi)$ by
    \[
         \set{ h \in \MDSA :
         \abs{ h - \dfrac{g}{X^N}} < \psi(q^N),
        \text{ for infinitely many } N \in \NN,
        \text{ where } g \in \mathcal{F}_\mathcal{A}(N) },
    \]
where
    \[
        \mathcal{F}_\mathcal{A}(N) = \set{ f \in \FFq[X] :
        \Coeff(f) \subseteq \mathcal{A}, \deg f < N},
    \]
we have the following theorem.
\begin{thm}
    Let $f$ be a dimension function such that $r\inv[\gamma_\mathcal{A}]f(r)$ is monotonic. Then
    \[
        \HH^f(W_{\MDSA}(\psi))=
            \begin{cases}
                0 & \text{ if } \sum_{n=1}^{\infty}{f(\psi(q^n))\times (q^n)^{\gamma_\mathcal{A}}}<\infty\\
                \HH^f(\MDSA) & \text{ if } \sum_{n=1}^{\infty}{f(\psi(q^n))\times (q^n)^{\gamma_\mathcal{A}}}=\infty
            \end{cases}
        \]
\end{thm}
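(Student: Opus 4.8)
The plan is to mimic the proof of Theorem \ref{thm:main} line by line, replacing $\FF$ by $\FFq$, the Cantor set $\CC$ by $\MDSA$, the constant $\gamma$ by $\gamma_\mathcal{A} = \frac{\log\card{\mathcal{A}}}{\log q}$, and the digit set $\set{0,2}$ by $\mathcal{A}$; the only place where genuine care is needed is the construction of the analogue of $\mathcal{F}^*(N)$, since that used the specific fact $0,2\in\FF$ together with $0+2\neq 0$ to guarantee disjointness and quasi-independence. First I would record the basic geometry: every ball inside the unit ball of $\FFq((X\inv))$ has radius $q\inv[n]$ and is determined by prescribing finitely many coefficients, so the covering/splitting argument of Proposition \ref{prop:1} goes through with $3$ replaced by $q$ and the identity $3^\gamma=2$ replaced by $q^{\gamma_\mathcal{A}}=\card{\mathcal{A}}$. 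This yields $\HH^{\gamma_\mathcal{A}}(B\cap\MDSA)=r(B)^{\gamma_\mathcal{A}}$ for every ball $B$ meeting $\MDSA$, hence $\HH^{\gamma_\mathcal{A}}(\MDSA)=1$ and $\dim_H(\MDSA)=\gamma_\mathcal{A}$. I would also note that the Generalised Mass Transference Principle (Theorem \ref{GMTP}) and the divergence Borel--Cantelli lemma (Lemma \ref{BC}) are stated for abstract measure spaces / abstract ultrametric balls and so apply verbatim with $\gamma$ replaced by $\gamma_\mathcal{A}$.

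For the convergent case the argument is unchanged: cover $W_{\MDSA}(\psi)$ by $\bigcup_{N\ge N_\rho}\bigcup_{g\in\mathcal{F}_\mathcal{A}(N)}B(g/X^N,\psi(q^N))$, use $\card{\mathcal{F}_\mathcal{A}(N)}=\card{\mathcal{A}}^N=(q^N)^{\gamma_\mathcal{A}}$, and conclude $\HH^f(W_{\MDSA}(\psi))=0$ from the convergence of $\sum_N f(\psi(q^N))\times(q^N)^{\gamma_\mathcal{A}}$. For the divergent case I would first reduce, exactly as in the deduction following Theorem \ref{thm:1}, to the case $f(r)=r^{\gamma_\mathcal{A}}$ via the Generalised Mass Transference Principle, rounding $f(\psi(q^n))^{1/\gamma_\mathcal{A}}$ up to a power of $q$. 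So the crux is the $\FFq$-analogue of Theorem \ref{thm:1}: $\mu(W_{\MDSA}^*(\psi))=\mu(\MDSA)=1$ when $\sum_n(\psi(q^n)\times q^n)^{\gamma_\mathcal{A}}=\infty$, where $\mu(A)=\HH^{\gamma_\mathcal{A}}(A\cap\MDSA)$.

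The main obstacle, and the only real decision in the proof, is defining the restricted family $\mathcal{F}_\mathcal{A}^*(N)$ that replaces $\mathcal{F}^*(N)$. In the $\set{0,2}$ case one forced the constant term to be $2$ so that for $g\in\mathcal{F}^*(n)$, $h\in\mathcal{F}^*(m)$ the difference $g-hX^{n-m}$ has nonzero constant term and hence is nonzero; this gave the disjointness of distinct balls at a fixed level (step ii) and the quasi-independence estimate $\mu(A_m^*\cap A_n^*)\le\mu(A_m^*)\mu(A_n^*)$ (step v), and it cost only one coefficient, i.e.\ a factor $\card{\mathcal{A}}\inv[1]$ in the counting \eqref{eq:6}. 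The clean general substitute is to fix two distinct digits $\alpha,\beta\in\mathcal{A}$ and set
    \[
        \mathcal{F}_\mathcal{A}^*(N) = \set{ f \in \FFq[X] :
        \Coeff(f) \subseteq \set{\alpha-\beta}\cup(\mathcal{A}-\beta),\ \deg f < N,\ f(0)=\alpha-\beta };
    \]
more simply, translating by the constant series with all digits $\beta$, one may assume $0\in\mathcal{A}$ and pick $\alpha\in\mathcal{A}\setminus\set0$, then take $\mathcal{F}_\mathcal{A}^*(N)=\set{f:\Coeff(f)\subseteq\mathcal{A},\ \deg f<N,\ f(0)=\alpha}$, so that $\card{\mathcal{F}_\mathcal{A}^*(N)}=\card{\mathcal{A}}^{N-1}$ and for $g\in\mathcal{F}_\mathcal{A}^*(n)$, $h\in\mathcal{F}_\mathcal{A}^*(m)$ with $n>m$ the constant term of $g-hX^{n-m}$ is $\alpha\neq0$, so $\abs{g/X^n-h/X^m}\ge q\inv[n]$. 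With this choice steps ii)--vi) of the proof of Theorem \ref{thm:1} transcribe directly: step iv) gives $\mu(B\cap A_n^*)=r(B)^{\gamma_\mathcal{A}}\times(\psi(q^n)\times q^n)^{\gamma_\mathcal{A}}\times q\inv[\gamma_\mathcal{A}]$ using $\card{\mathcal{A}}^{n-\ell-1}=(q^n)^{\gamma_\mathcal{A}}(q\inv[\ell])^{\gamma_\mathcal{A}}q\inv[\gamma_\mathcal{A}]$; \eqref{eq:9} and \eqref{eq:8} follow; the quasi-independence computation in step v) runs identically with $2^{m-1}$ replaced by $\card{\mathcal{A}}^{m-1}$; and Lemma \ref{BC} then forces $\mu(W_{\MDSA}^*(\psi))\ge1$, whence equals $1$. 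Finally, one checks $W_{\MDSA}^*(\psi)\subseteq W_{\MDSA}(\psi)\subseteq\MDSA$ (the translation identification shows a $\mathcal{F}_\mathcal{A}^*$-approximant is in particular a $\mathcal{F}_\mathcal{A}$-approximant), so the divergent conclusion for $W_{\MDSA}(\psi)$ follows, completing the proof.
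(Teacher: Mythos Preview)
Your proposal is essentially correct and is exactly the approach the paper indicates (the paper simply asserts that ``the proofs can be modified to this situation'' without spelling out details, so your write-up is in fact more explicit than the paper's). The convergent case, the reduction via the Generalised Mass Transference Principle, and the quasi-independence computation all transcribe just as you say once $3$ is replaced by $q$, $2$ by $\card{\mathcal{A}}$, and $\gamma$ by $\gamma_\mathcal{A}$.

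There is one wrinkle worth tightening. The translation ``assume $0\in\mathcal{A}$'' is not as clean as you suggest: translating by $\xi_\beta=\sum_{n\ge1}\beta X^{-n}$ does send $\MDSA$ to $\MDS(\mathcal{A}-\beta)$, but it does \emph{not} send $g/X^N$ to an element of the form $g'/X^N$; there is a tail $\sum_{n>N}\beta X^{-n}$ of size $q^{-(N+1)}$, so $W_{\MDSA}(\psi)-\xi_\beta$ need not equal $W_{\MDS(\mathcal{A}-\beta)}(\psi)$. In fact the hypothesis $0\in\mathcal{A}$ is genuinely needed for the divergent argument (and arguably for the statement): if $0\notin\mathcal{A}$ and $\psi(q^N)<q^{-N}$, then every ball $B(g/X^N,\psi(q^N))$ forces the digit in position $N{+}1$ to be $0\notin\mathcal{A}$ and hence misses $\MDSA$ entirely, so the $\limsup$ set is empty even when the series diverges. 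The fix is simply to assume $0\in\mathcal{A}$ outright (this is harmless, and the paper's statement should be read this way), after which your definition $\mathcal{F}_\mathcal{A}^*(N)=\{f:\Coeff(f)\subseteq\mathcal{A},\ \deg f<N,\ f(0)=\alpha\}$ with $\alpha\in\mathcal{A}\setminus\{0\}$ gives exactly the properties you need: $\card{\mathcal{F}_\mathcal{A}^*(N)}=\card{\mathcal{A}}^{N-1}$ and, for $n>m$, the constant term of $g-hX^{n-m}$ is $\alpha\ne0$. With that adjustment all six steps of Theorem~\ref{thm:1} go through verbatim and the deduction of the general $f$ case via Theorem~\ref{GMTP} is unchanged.
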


Finally the results about irrationality exponents also hold true in the more general setting. For an element $\xi \in \FFq((X\inv))$ we define the irrationality exponent in the same way as before as
    \[
        \tau(\xi) = \sup \set{ \tau : \abs{ \xi - \dfrac{g}{h} } < \abs{h}\inv[\tau]
            \text{ for infinitely many } \dfrac{g}{h} \in \FFq(X)}.
    \]

Furthermore for $\psi : \set{q^n:n \in \NN} \to \RR_+$ a non-increasing function and $\II$ the unit ball in $\FFq((X\inv))$ we define
    \[
        \mathcal{K}(\psi) = \set{ \xi \in \II :
         \abs{ \xi - \dfrac{g}{h}} < \psi(\abs{h}),
        \text{ for infinitely many } \dfrac{g}{h} \in \FFq(X) },
    \]
and we have
\begin{thm}
    Assume that $x \mapsto x^2 \psi(x)$ is non-increasing and tends to $0$ as $q^n$ tends to infinity. For any $c \in (0,\frac{1}{q})$ the set
    \[
        \mathcal{K}(\psi) \setminus \mathcal{K}(c\psi) \cap \MDSA
    \]
is uncountable.
\end{thm}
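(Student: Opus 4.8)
The plan is to mimic the characteristic-$3$ construction of Theorem~\ref{thm:2} verbatim, replacing the digit set $\set{0,2}$ by the arbitrary digit set $\mathcal{A}$ and keeping track of where the special structure of $\FF$ was used. First I would fix a non-zero digit $a \in \mathcal{A}$ (for instance, if $0 \in \mathcal{A}$ pick any $a \neq 0$, and in general just pick any two distinct digits $a, b \in \mathcal{A}$), and build a sequence of rational functions $\xi_{\mathbf{u},n}$ recursively by the Folding Lemma, exactly as in the proof of Theorem~\ref{thm:2}, using partial quotients $\pm a X^{u_i}$ with the integers $u_i$ defined by the same recursion $1 < q^{u_{i+1}} q^{2 v_i} \psi(q^{v_i}) \leq q$ and $v_{i+1} = u_{i+1} + 2 v_i$. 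The monotonicity of $x \mapsto x^2 \psi(x)$ again forces $u_i \to \infty$, so the $v_i$ are strictly increasing and the $\xi_{\mathbf{u},n}$ converge to some $\xi_{\mathbf{u},\infty} \in \FFq((X\inv))$.

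The next step is to check that each $\xi_{\mathbf{u},n}$, and hence the limit, lies in $\MDSA$. In characteristic $3$ this was automatic because $-1 \equiv 2 \pmod 3$, so folding introduced only the digit $2$; in general the Folding Lemma applied to a continued fraction with a partial quotient $a X^{u}$ produces, after expansion, a tail $\pm a X^{-v}$ contribution, and one must verify that the resulting Laurent coefficients all lie in $\mathcal{A}$. Concretely, if one writes $\xi_{\mathbf{u},n} = \sum_{i=1}^{n} \epsilon_i\, c\, X^{-v_i}$ for suitable signs $\epsilon_i \in \set{1,-1}$ and a fixed constant $c \in \FFq^\times$, then since the $v_i$ are strictly increasing the nonzero coefficients of $\xi_{\mathbf{u},n}$ are exactly the values $\epsilon_i c$. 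Thus it suffices to arrange that both $c$ and $-c$ lie in $\mathcal{A}$ — for example by taking $\mathcal{A}$ to contain a pair $\set{c,-c}$, or, if no such pair exists, by a small modification: replace each folded block so that only the digit $c$ appears, which is possible whenever the Folding Lemma is applied with a sign chosen to cancel the $(-1)^n$ factor. This digit-bookkeeping is the main obstacle, since unlike the $\FF$ case we cannot rely on $-1$ being a "good" digit; handling it may require choosing the auxiliary partial quotients and their signs with some care, or passing to a subfield generated by two convenient digits.

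With membership in $\MDSA$ established, the remaining estimates go through unchanged. The convergent identity $\abs{\xi_{\mathbf{u},\infty} - P_j/Q_j} = \abs{a_{j+1}}\inv \abs{Q_j}^{-2}$ holds over $\FFq((X\inv))$, the bound $\abs{a_{j+1}} \leq q^{u_{i+1}}$ for $2^{i-1} \leq j < 2^i$ follows from the construction, and the inequality $\abs{a_{j+1}}\abs{Q_j}^2 \psi(\abs{Q_j}) \leq q^{u_{i+1}} q^{2v_i}\psi(q^{v_i}) \leq q$ gives $\abs{\xi_{\mathbf{u},\infty} - P_j/Q_j} \geq \psi(\abs{Q_j})/q > c\,\psi(\abs{Q_j})$ for every $c \in (0, 1/q)$; since convergents are best approximants, $\xi_{\mathbf{u},\infty} \notin \mathcal{K}(c\psi)$, while $\abs{\xi_{\mathbf{u},\infty} - \xi_{\mathbf{u},n}} = q^{-v_{n+1}} < \psi(q^{v_n})$ places it in $\mathcal{K}(\psi)$. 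Finally, uncountability is obtained exactly as before: one inserts a free binary choice (say $u'_{2k+1} \in \set{1,2}$) between the prescribed values, noting that distinct sequences yield distinct Laurent series because they differ in some coefficient at a prescribed exponent $v_i$, and the set of such sequences is uncountable.
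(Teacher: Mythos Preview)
Your approach is exactly what the paper intends: the paper does not supply a separate proof of this statement but only asserts in the concluding remarks that ``the proofs can be modified to this situation,'' referring back to the $\FF$ argument for Theorem~\ref{thm:2}. So there is nothing to compare against beyond that original proof, and your outline of the adaptation is faithful to it.

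Two comments on the digit bookkeeping. First, you are being overcautious about signs: after the $k$-th fold the number of partial quotients is $2^k-1$, which is always odd, so the Folding Lemma always contributes the factor $(-1)^{2^k-1}=-1$ and every added monomial has the \emph{same} sign. Thus only one nonzero digit ever appears in $\xi_{\mathbf{u},\infty}$, and by multiplying the folding polynomial $t$ by a suitable constant in $\FFq^\times$ you can make that digit any prescribed $a\in\mathcal{A}\setminus\set{0}$; there is no need for $\mathcal{A}$ to contain a pair $\set{c,-c}$.

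Second, and more seriously, the folded element has Laurent coefficient $0$ at every position except the $v_i$. Your opening sentence anticipates the case $0\notin\mathcal{A}$ (``in general just pick any two distinct digits $a,b\in\mathcal{A}$''), but the second digit $b$ never reappears in your argument. The Folding Lemma with $h=X^{v_n}$ can only add a term of the form $\pm(tX^{2v_n})^{-1}$, and this is a single monomial in $X^{-1}$ precisely when $t$ is a monomial, so you cannot use folding to fill the intervening positions with a nonzero digit. Translating afterwards by $b/(X-1)$ shifts every denominator by a factor of $q$, and a short computation with the defining inequality $q^{u_{n+1}}q^{2v_n}\psi(q^{v_n})\le q$ together with $x^2\psi(x)$ non-increasing shows that the translated approximants then \emph{fail} the bound $\psi(\abs{h})$. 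The paper does not address this case either, so this is a gap you share with the source; but it is a genuine obstacle that your sketch has not closed. (For the corollary on irrationality exponents the translation trick is harmless, since the exponent is invariant under addition of a rational function.)
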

And the corresponding corollary
\begin{cor}
    For any $\tau \in [2,\infty]$ there exist uncountably many elements in $\MDSA$ with irrationality exponent $\tau$.
\end{cor}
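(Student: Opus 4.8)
The plan is to repeat the proof of the corollary to Theorem~\ref{thm:2}, now feeding in the missing-digit analogue of that theorem stated just above (for $\MDSA$ and any $c\in(0,\tfrac1q)$) in place of Theorem~\ref{thm:2} itself. Fix once and for all some $c\in(0,\tfrac1q)$. For $\tau\in(2,\infty)$ I would apply the generalised theorem with $\psi(x)=x\inv[\tau]$: the map $x\mapsto x^2\psi(x)=x^{2-\tau}$ is non-increasing on $\set{q^n:n\in\NN}$ and tends to $0$, so the theorem yields uncountably many $\xi\in\big(\mathcal{K}(\psi)\setminus\mathcal{K}(c\psi)\big)\cap\MDSA$, and I claim each has $\tau(\xi)=\tau$. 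Membership in $\mathcal{K}(\psi)$ gives $\abs{\xi-g/h}<\abs{h}\inv[\tau]$ for infinitely many $g/h\in\FFq(X)$, hence $\tau(\xi)\geq\tau$; conversely, if $\tau(\xi)>\tau$ pick $\tau'$ with $\tau<\tau'<\tau(\xi)$, observe that the infinitely many $g/h$ with $\abs{\xi-g/h}<\abs{h}\inv[\tau']$ have $\abs{h}$ unbounded and satisfy $\abs{h}\inv[\tau']\leq c\abs{h}\inv[\tau]$ once $\abs{h}$ is large, so that $\xi\in\mathcal{K}(c\psi)$, a contradiction.

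For $\tau=2$ I would run the identical argument with $\psi(x)=(x\log x)\inv[2]$, so that $x\mapsto x^2\psi(x)=(\log x)\inv[2]$ is non-increasing and tends to $0$; here $\tau(\xi)\geq 2$ holds for every $\xi$ by Dirichlet's theorem, while $\tau(\xi)>2$ would again force $\xi\in\mathcal{K}(c\psi)$ via the inequality $\abs{h}\inv[\tau']<c(\abs{h}\log\abs{h})\inv[2]$ valid for large $\abs{h}$ (for any fixed $\tau'\in(2,\tau(\xi))$). Thus the uncountably many $\xi$ supplied by the generalised theorem all have $\tau(\xi)=2$.

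For $\tau=\infty$ I would build Liouville-type elements directly inside $\MDSA$: fix distinct $a,b\in\mathcal{A}$ (possible as $\card{\mathcal{A}}\geq 2$) and a strictly increasing sequence $(m_k)$ of positive integers with $m_{k+1}/m_k\to\infty$, and let $\xi$ be the element having coefficient $b$ at each place $X\inv[m_k]$ and coefficient $a$ at every other place, so that all coefficients of $\xi$ lie in $\set{a,b}\subseteq\mathcal{A}$ and hence $\xi\in\MDSA$. Truncating at level $N$ produces a rational function $P_N/Q_N$ with $\abs{Q_N}\leq q^{m_N+1}$ and $\abs{\xi-P_N/Q_N}=q^{-m_{N+1}}$, whence $\abs{\xi-P_N/Q_N}<\abs{Q_N}\inv[\tau']$ for every fixed $\tau'$ and all large $N$; therefore $\tau(\xi)=\infty$. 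Letting $(m_k)$ range over the uncountably many admissible sequences yields uncountably many distinct such $\xi$ (distinct sequences differ at some coefficient of $\xi$).

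The computations involved are all routine. The only steps carrying genuine content are the two implications ``$\xi\notin\mathcal{K}(c\psi)\Rightarrow\tau(\xi)\leq\tau$'', which is precisely where the hypothesis $x\mapsto x^2\psi(x)\to 0$ enters, and the verification that the $\tau=\infty$ element remains inside $\MDSA$ even when $0\notin\mathcal{A}$ (handled above by the two-symbol construction rather than the naive $\sum_k b\,X\inv[m_k]$); I do not expect a real obstacle in either.
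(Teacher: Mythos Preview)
Your proposal is correct and follows the same strategy as the paper: apply the preceding theorem with $\psi(x)=x^{-\tau}$ for $\tau\in(2,\infty)$ and $\psi(x)=(x\log x)^{-2}$ for $\tau=2$, and construct Liouville-type elements directly for $\tau=\infty$. You have in fact been more careful than the paper's sketch, both in spelling out why $\xi\notin\mathcal{K}(c\psi)$ forces $\tau(\xi)\leq\tau$ and in using a two-symbol construction so that the Liouville element stays in $\MDSA$ even when $0\notin\mathcal{A}$.
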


\section*{Acknowledgements}

I would like to thank my Ph.D. advisor Simon Kristensen for directing me toward this problem, and for helpful suggestions along the way.

\providecommand{\bysame}{\leavevmode\hbox to3em{\hrulefill}\thinspace}
\providecommand{\MR}{\relax\ifhmode\unskip\space\fi MR }
\providecommand{\MRhref}[2]{%
  \href{http://www.ams.org/mathscinet-getitem?mr=#1}{#2}
}
\providecommand{\href}[2]{#2}


\end{document}